\newtheorem{thm}{Theorem}[section]
\newtheorem{lemma}[thm]{Lemma}
\newtheorem{prop}[thm]{Proposition}
\newtheorem{cor}[thm]{Corollary}
\newtheorem{remark}[thm]{Remark}
\newtheorem{defi}[thm]{Definition}
\newcommand {\mm}[1] {\ifmmode{#1}\else{\mbox{\(#1\)}}\fi}
\newcommand{\nr}{\mathrm{nr}}
\newcommand{\R}        {\mm{{\mathbb R}}}
\newcommand{\U}            {\mathcal U}
\newcommand{\vol}             {\mathrm{vol}}
\newcommand{\dm}             {d_{max}}
\newcommand{\Zh}{\mathrm{Zh}}
\newcommand{\g}{\mathfrak g}
\newcommand{\h}{\mathfrak h}
\title{A transfer principle and applications to eigenvalue
  estimates for graphs}
\author{Omid Amini}
\address{CNRS - DMA, \'Ecole Normale Sup\'erieure, Paris}
 \email{oamini@math.ens.fr}
 \author{David Cohen-Steiner}
 \address{INRIA, 2004 Route des Lucioles, BP93,
           Sophia-Antipolis, France}
\email{david.cohen-steiner@inria.fr}
\begin{document}
\maketitle

\begin{abstract}
In this paper, we prove a variant of the Burger-Brooks transfer principle which, combined
 with recent eigenvalue bounds for surfaces, allows to obtain upper
 bounds on the eigenvalues of graphs as a function of their
 genus. More precisely, we show the existence of a universal constants
 $C$ such that the $k$-th  eigenvalue $\lambda_k^{nr}$ of the
 normalized Laplacian of a graph $G$ of (geometric) genus $g$ on $n$
 vertices satisfies $$\lambda_k^{nr}(G)  \leq C \frac{d_{\max}(g+k)}{n},$$ 
where $d_{\max}$ denotes the maximum valence of vertices of the
graph. This result is tight up to a change in the value of the constant $C$, and improves recent results of Kelner, Lee, Price and Teng on bounded genus graphs.
 
 To show that the transfer theorem might be of independent interest, we relate eigenvalues of the
 Laplacian on a metric graph to the eigenvalues of its 
 simple graph models, and discuss an application to the
 mesh partitioning problem, extending pioneering results of  Miller-Teng-Thurston-Vavasis and Spielman-Tang to arbitrary meshes.
\end{abstract}

\section{Introduction}
The spectrum of the Laplacian of a finite graph reflects information about the structural properties of the graph and has been
successfully used in a large variety of applications to other domains.
In particular, the eigenvalues of a bounded degree graph provide information on the existence of
good clusterings of that graph, see~\cite{AM85} for clustering in two classes and \cite{LGT12,LRTV12} for $k$-way clusterings, whose optimal quality 
is shown to relate to the $k$-th eigenvalue. 
 
\noindent In particular, upper bounds on the
eigenvalues of a class of graphs directly translate into efficient clustering
algorithms with quality guarantees. This motivated a series of work,
starting with Spielman and Teng \cite{ST07}, who gave an $O(1/n)$ bound for the
Fiedler value of a bounded degree planar graph on $n$ vertices, using
a suitably centered circle packing representation of the graph.  Kelner
extended this result to an $O((g+1)/n)$ bound for (geometric) genus $g$ graphs \cite{K06}. The
argument  uses Riemann-Roch
theorem to find a circle packing representation of the graph.  Recently, Kelner, Lee, Price and Teng
proved an $O((g+1)\log(g+1)^2k/n)$ upper bound for the $k$-th
eigenvalue \cite{KLPT11}, using a multicommodity flow problem to suitably uniformize the graph metric.

The study of the spectrum of a finite graph is in many
  ways related to the spectral theory of Riemannian manifolds, and results in geometric analysis have been a source of inspiration to state and 
prove corresponding results concerning finite graphs. In particular, eigenvalue bounds for surfaces have a somewhat parallel history. Hersch \cite{H70} first proved an $O(1/\vol(M))$ bound for the Neumann value of the sphere $\mathbb S^2$ 
equipped with a Riemannian metric. Yang and Yau \cite{YY80}
then showed that for genus $g$ surfaces an $O((g+1)/\vol(M))$ bound holds,
and Li and Yau  improved the latter result by replacing the genus with
the finer conformal invariant they defined~\cite{LY82}. It is interesting to notice that these 
proofs are quite similar at a high level to the ones later used in the graph
setting.  Conformal uniformization was used in place of circle packing
representations, but the very same topological argument for
centering the packing in the discrete case was used in the manifold
case as well. For higher eigenvalues, Korevaar \cite{K93} established an
$O((g+1)k/\vol(M))$ for genus $g$ surfaces, and Hassannezhad \cite{H11} improved
this to $O((g+k)/\vol(M))$, by combining the two methods of constructing disjoint
capacitors of Grigor'yan, Netrusov and Yau \cite{GNY04}, and Colbois and Maerten \cite{CM08}.

While traditionally bounds on graph eigenvalues are 
used to prove bounds for Riemannian manifolds~\cite{Br86, BM04, B86, B78, CC88, C86},  it is intriguing
to see that the spectral theory of Riemannian manifolds has not
been much used so far to provide information on the spectral
properties of general finite graphs.

Our aim in this paper is to show how 
 eigenvalue bounds for surfaces combined with basic spectral theory of (singular) surfaces, and 
 a  suitable transfer principle, 
 allows to obtain eigenvalue estimates for graphs in terms of their geometric genus. In this way, we are able to 
 extend the above mentioned result of~\cite{H11, K93, YY80} for surfaces to the graph setting using a suitable variant of
the Burger-Brooks transfer method, c.f. Theorem~\ref{thm:2-fold}. Our results are tight and improve the recent results of Kelner, Lee, Price and Teng~\cite{KLPT11} on bounded genus graphs. In addition to providing a uniform arguably more conceptual proof of the results of~\cite{KLPT11, K06, ST07}, we hope that our method makes the above mentioned existing similarities between the methods used in the spectral theory of surfaces and graphs more transparent.

 The transfer principle proved in this paper may be of independent interest. We shall show that it can be used to provide uniform upper and lower bounds on the 
 eigenvalues of metric graphs in terms of the eigenvalues of their simple graph models. Furthermore, it allows to generalize to completely arbitrary meshes the mesh partitioning results of Miller, Teng, Thurston, and Vavasis~\cite{MTTV98} and Spielman and Teng~\cite{ST07}.

\subsection{Statement of the main theorem on eigenvalues of bounded genus graphs} 
Let $G=(V,E)$ be a finite simple graph, that we assume connected all through the paper. For two vertices $u,v \in V$, we write $u \sim v$ if the two vertices $u$
and $v$ are connected by an edge in $G$. The valence of a vertex $v$ of $G$ is denoted by $d^G_v$, or simply $d_v$ if there is no risk of confusion and the graph $G$ is understood from the context. We denote by  
$d_{\max}$ the maximum degree of vertices of the graph, and by $n$ the number of vertices. 
The geometric genus of $G$ is by definition the minimum integer $g$ such that $G$ can be embedded with no crossing on 
the compact orientable surface of genus $g$.

Denote by $C(G)$ the vector space of all real valued functions $f$ defined on the set of vertices of $G$.
The (discrete) Laplacian $\Delta$ and the normalized Laplacian $\mathcal L$ of $G$ are defined as follows:  the Laplacian  $\Delta: C(G) \rightarrow C(G)$ is the linear operator which sends a function $f\in C(G)$ to $\Delta(f)\in C(G)$ defined   by $$\forall\, v\in V(G), \quad \Delta(f)(v) = \sum_{u: u\sim v} f(v) -f(u).$$

Let $S$ be the linear operator on $C(G)$ whose matrix in the standard basis of $C(G)$ is diagonal with entries the valences of the vertices of
$G$, i.e., for any $f\in C(G)$
\[ \forall\, v\in V(G), \quad S(f)(v) = d_v f(v).\] 
The normalized Laplacian is the operator $S^{-1/2}\Delta
S^{-1/2}$.

\medskip

We denote by 
$$\lambda_0(G) = 0 < \lambda_1(G) \leq \lambda_2(G) \leq \dots \leq
\lambda_{n-1}(G)$$ the set of eigenvalues of $\Delta$, which we call the
standard spectrum of $G$, and by $$\lambda_0^{\nr}(G) =0 <
\lambda_1^{\nr}(G) \leq \dots \leq \lambda_{n-1}^{\nr}(G)$$ the set of all
eigenvalues of the normalized Laplacian $\mathcal L$, which we call
the normalized spectrum. The standard and normalized spectrum of $G$ are easily seen to satisfy the inequalities 
$d_{\min} \,\lambda_k^{\nr}(G) \leq \lambda_k(G) \leq d_{\max} \,\lambda_k^{\nr}(G)$
for any $k$. 

\medskip

In this paper we prove the following theorem.

\begin{thm}\label{thm:main} There exists a universal constant $C$ such that the eigenvalues of the
normalized Laplacian of any graph $G$ on $n$ vertices satisfy:
$$
\forall k\in \mathbb N, \qquad \lambda_k^{\nr}(G) \leq C\;\frac{\dm (g+k)}{n},
$$
where $d_{\max}$ and $g$ are the maximum valence and the geometric
genus of $G$, respectively.
\end{thm}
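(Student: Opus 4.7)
The plan is to deduce Theorem~\ref{thm:main} from Hassannezhad's surface eigenvalue bound $\lambda_k(M) \leq C(g+k)/\vol(M)$ \cite{H11} via the variant of the Burger--Brooks transfer principle promised as Theorem~\ref{thm:2-fold}.

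First I would construct, from $G$, a piecewise flat singular surface $\Sigma_G$ of genus $g$. Since $G$ has geometric genus $g$, it admits a cellular embedding in the closed orientable surface of genus $g$; I thicken this combinatorial picture by replacing each vertex $v$ by a flat disk of bounded radius, each edge by a flat rectangular strip of length of order $1$ and width of order $1/d_{\max}$ (so that the $d_v \leq d_{\max}$ strip ends fit around the boundary of the disk at $v$), and each face by a capped disk. The vertex disks dominate the total area, yielding $\vol(\Sigma_G)$ of order $n$. By construction, a Poincar\'e inequality on each narrow strip gives
\[
\int_{\Sigma_G} |\nabla f|^2\, \geq\, \frac{c_1}{d_{\max}} \sum_{u \sim v} \bigl(f(u)-f(v)\bigr)^2,
\]
while near-constancy of $f$ on each vertex disk gives $\int_{\Sigma_G} f^2 \leq C_2 \sum_v f(v)^2 \leq C_2 \sum_v d_v f(v)^2$.

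Applying Hassannezhad's theorem to $\Sigma_G$ then yields $\lambda_k(\Sigma_G) \leq C_3(g+k)/n$. The transfer principle of Theorem~\ref{thm:2-fold} converts $k+1$ orthogonal low-energy functions on $\Sigma_G$ into a $(k+1)$-dimensional test space on $V(G)$ (via restriction, or averaging on each vertex disk); the two displayed comparisons then bound the normalized Laplacian Rayleigh quotient of this test space by $d_{\max}$ times the surface Rayleigh quotient, the single factor of $d_{\max}$ coming from the Poincar\'e constants on the narrow strips. Inserting this into the min--max characterization of eigenvalues gives
\[
\lambda_k^{\nr}(G) \leq C_4\, d_{\max}\, \lambda_k(\Sigma_G) \leq \frac{C\, d_{\max}(g+k)}{n},
\]
as claimed.

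The main obstacle is the transfer step itself. The metric on $\Sigma_G$ must be engineered so that the Poincar\'e constants on strips, the $L^2$-comparison between surface norms and degree-weighted vertex norms, and the total volume all combine with universal, matching constants independent of $g$, $k$, $n$, and $d_{\max}$; simultaneously, $L^2(\Sigma_G)$-orthogonality of the chosen eigenfunctions must descend to linear independence of their restrictions to $V(G)$ so that min--max applies. Balancing all these requirements uniformly over arbitrary valence distributions is precisely what the variant of the Burger--Brooks principle stated as Theorem~\ref{thm:2-fold} is designed to provide.
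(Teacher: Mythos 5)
Your high-level plan matches the paper's: build a genus-$g$ surface from a cellular embedding of $G$, invoke Hassannezhad's bound $\lambda_k(M)\mu(M)\leq A(g+k)$, and transfer back to the graph. But the construction and the transfer step are genuinely different from the paper, and the transfer step as written has a gap. The paper does not thicken $G$ into a ribbon surface; it cones over each face to obtain a weak triangulation $\mathscr G$ of $M$, assigns edge lengths so that each triangle has angle $\pi/(2d_{\max})$ at the graph vertices, and then takes the open stars $\mathscr S_v$ of the original vertices as the $2$-fold cover $\U$. Two features of this choice do real work that your sketch skips: (i) all nonempty overlaps $\mathscr S_u\cap\mathscr S_v$ have \emph{equal} measure, so the weighted Laplacian $\mathcal L_{\U}$ is literally the normalized Laplacian of $G$, with no approximation; and (ii) the angle choice makes each unfolded star a planar, non-overlapping region that can itself be covered by convex pieces of bounded diameter, and a \emph{second} application of Theorem~\ref{thm:2-fold} (plus the Payne--Weinberger inequality) shows the stars have \emph{universal} Neumann constant $\eta=C_3$. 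The $d_{\max}$ in the final bound then comes entirely from the area estimate $\mu(M)\geq C_4\,n/d_{\max}$, not from any Poincar\'e constant.

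The concrete gap in your version is the displayed inequality $\int_{\Sigma_G}f^2\leq C_2\sum_v f(v)^2$. This cannot hold for a general smooth $f$: take $f$ vanishing at all vertex disks and supported in the middle of one strip. What is true, and what the proof of Theorem~\ref{thm:2-fold} actually establishes, is a corrected inequality of the form $\|\Phi f\|_2^2\geq 2\|f\|_2^2-\tfrac{2}{\eta}\int\|\nabla f\|^2$, where $\Phi$ averages over the cover elements; the Dirichlet term is precisely what you omit, and it is also what handles your linear-independence worry (via the compact self-adjoint operator $\Phi^*\Phi$, not by asserting that orthogonal eigenfunctions restrict to independent vertex vectors). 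To apply Theorem~\ref{thm:2-fold} you must actually exhibit a $2$-fold cover of all of $\Sigma_G$ with a uniform Neumann lower bound: in the ribbon picture, the open stars ``disk plus half-strips'' do not cover the face caps at all, and it is not clear one can enlarge them to include the caps while keeping both the $2$-fold property and a $d_{\max}$-free Neumann bound, especially since the face degrees can be arbitrarily large. Resolving this brings you essentially back to the paper's cone construction, which is designed precisely to make all these constants universal.
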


The linear dependance in the maximum degree is clearly optimal, as can
be seen by considering star graphs, which have lower bounded Fiedler
value. The above result also implies a similar bound for the eigenvalues of the standard
Laplacian, at the expense of an extra $\dm$ factor. We note that
Kelner, Lee, Price and Teng~\cite{KLPT11} give a similar bound for
the standard spectrum with a linear rather than quadratic dependence
in $\dm$. However, their bound has a $gk \log(g+1)^2$ dependence
instead of our $(g+k)$ dependence. 
In addition to simplifying and improving the result of~\cite{KLPT11} 
for bounded genus graphs, we note that the dependence in $g$ and $k$ of our estimate is
tight, at least when $g$ is sufficiently high, see Remark~\ref{rem:tightness}.

Informally, the improvement over \cite{KLPT11} means
that the asymptotic behavior of graphs' 
eigenvalues do not depend on the (geometric) genus of the graph. This fact, which may be
seen as a one-sided discrete form of Weyl's law for surfaces, is consistent with the
intuition that at a small scale, bounded genus
 graphs behave like planar graphs. Finally, we note that the result in \cite{KLPT11} also
 applies to graphs in any fixed proper minor-closed family (where the genus $g$ is replaced with a parameter $h$ depending on the family), while the stronger bounds of Theorem~\ref{thm:main} cannot be extended to minor-closed classes, as we show by explicit examples in Remark~\ref{rem:mc}.

\subsection{Two-fold covers and their associated discrete Laplacians}

Let $M$ be a measured topological space, and denote by $\mu$ the measure on $M$.
A {\it 2-fold cover} of $M$ is a finite collection $\U = (U_v)_{v\in V}$, for a finite index set $V$, of open subsets $U_v$ of non-zero measure 
such that almost every point in $M$
is covered by exactly two subsets. To any 2-fold cover of  a measured space we associate a discrete Laplacian as follows:

We first form a graph $G=(V,E)$ on the set of vertices $V$ and with edges $\{u,v\} \in E$ for two vertices $u,v$ such that $\mu(U_v\cap U_u) \neq 0$. We define a weight function $\omega : E \rightarrow \mathbb R$ which to any edge $e=\{u,v\}$ of $G$, associates the weight $\omega(e) = \mu(U_u \cap U_v)$. The weighted valence $d^{\omega}_v$ of a vertex $v$ of $G$ is defined by 
  $$d^\omega_v = \sum_{u: u\sim v} \mu(U_u\cap U_v).$$
  
The discrete Laplacian associated to the 2-fold cover $\mathcal U$ denoted by
$\mathcal L_{\U}$ is the normalized graph Laplacian associated to the
weighted graph $(G,\omega)$. This is defined from the 
weighted Laplacian by normalizing using the weighted valence  (as in the previous section). Formally, define the weighted Laplacian $\Delta_{\mathcal U} : C(G) \rightarrow C(G)$ by 
$$\forall v\in V, \quad \Delta_{\mathcal U}(f)(v) = \sum_{u: u\sim v} \Bigl(f(v) -f(u)\Bigr) w(\{u,v\}),$$
for any $f\in C(G)$. Let $S_{\mathcal U}$ be the diagonal operator with entries the weighted valence $d^{\omega}_v$ of vertices $v\in V$, i.e., for any $f\in C(G)$,
\[\forall\, v\in V, \quad S_{\mathcal U}(f)(v) = d_v^\omega \, f(v).\]
Then we let $\mathcal L_{\U} :=S_{\mathcal U}^{-\frac 12} \Delta_{\mathcal U} S_{\mathcal U}^{-\frac 12}$. Denote by $\lambda_k(\mathcal L_\U)$ the $k$-th smallest eigenvalue of $\mathcal L_{\U}$.

\medskip

When $(M, \mu)$ carries a natural notion of Laplacian, it is possible to relate the eigenvalues of the Laplacian on $M$ to the eigenvalues of $\mathcal L_{\U}$ for any 2-fold cover $\U$. More precisely, let the measured space $(M, \mu)$ belong to any of the following three classes: 

\medskip

\begin{itemize}
 \item[($\mathscr C1$)] a smooth manifold with a smooth Riemannian metric $\g$, and $\mu$ the measure associated to the metric $\g$;

\item[($\mathscr C2$)]  a compact smooth surface  with a conformal class of smooth 
Riemannian metrics $\g$, and  $\mu$ a Radon measure absolutely continuous with respect to $\mu_\g$, c.f. Section~\ref{sec:prel};

\item[($\mathscr C3$)] a metric graph with $\mu$ the Lebesgue measure.
\end{itemize}
In any of the above cases, we can define a Laplacian on $(M, \mu)$ c.f.~Section~\ref{sec:prel} and Section~\ref{sec:metric}, and we denote by $\lambda_k(M,\mu)$, or simply $\lambda_k(M)$ if there is no risk of confusion, the eigenvalues for the corresponding Laplacian.

\medskip

Our  transfer principle is stated as follows. 

\begin{thm}\label{thm:2-fold} Let $(M, \mu)$ be a measured space as in $(\mathscr C1)$, $(\mathscr C2)$, or $(\mathscr C3)$ above.  
  Assume all the elements in a 2-fold cover $\U$ of $M$ have Neumann value
  at least $\eta$. Then for all positive integers $k$ we have:
$$
\lambda_k(\mathcal L_\U) \leq 2\frac{\lambda_{k}(M)}{\eta}.
$$
\end{thm}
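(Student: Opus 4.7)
The strategy is to prove Theorem~\ref{thm:2-fold} via the variational min--max characterization of eigenvalues on both sides of the desired inequality: from the span of the first $k+1$ eigenfunctions of the Laplacian on $M$, I would build a test space of dimension $k+1$ for the discrete Rayleigh quotient. For any $\phi\in H^{1}(M,\mu)$, I set
\[
\tilde\phi(v)\;=\;\frac{1}{\mu(U_v)}\int_{U_v}\phi\,d\mu.
\]
Because the cover is $2$-fold, almost every point of $U_v$ belongs to exactly one other $U_u$, so $d_v^{\omega}=\mu(U_v)$ and the Rayleigh quotient for $\mathcal L_{\U}$ takes the convenient form
\[
R_V(\tilde\phi)\;=\;\frac{\sum_{\{u,v\}\in E}\mu(U_u\cap U_v)\,(\tilde\phi(u)-\tilde\phi(v))^{2}}{\sum_v \mu(U_v)\,\tilde\phi(v)^{2}}.
\]
The second key identity, used repeatedly, is the $2$-fold telescoping $\sum_v\int_{U_v}h\,d\mu=2\int_M h\,d\mu$ for any integrable $h$.

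Let $E_k$ be the span of the first $k+1$ eigenfunctions of the Laplacian on $(M,\mu)$ and $T\colon E_k\to C(V)$ the averaging map $\phi\mapsto\tilde\phi$. I would then argue by dichotomy. If $T$ fails to be injective, pick a nonzero $\phi\in\ker T\cap E_k$; then $\phi$ has zero mean on every $U_v$, so the Poincaré inequality coming from the Neumann-value hypothesis gives $\int_{U_v}\phi^{2}\,d\mu\le \eta^{-1}\int_{U_v}|\nabla\phi|^{2}\,d\mu$. Summing over $v$ and invoking the $2$-fold identity yields $R_M(\phi)\ge \eta$ and hence $\lambda_{k}(M)\ge \eta$, so the conclusion follows from the trivial estimate $\lambda_k(\mathcal L_{\U})\le 2$ available for any normalized Laplacian.

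If $T$ is injective, then $F:=T(E_k)$ is an admissible $(k+1)$-dimensional test space, and it suffices to show $R_V(\tilde\phi)\le \frac{2}{\eta}R_M(\phi)$ for every $\phi\in E_k$. For the numerator, I would decompose $\tilde\phi(u)-\tilde\phi(v)=(\tilde\phi(u)-\phi)+(\phi-\tilde\phi(v))$ pointwise on $U_u\cap U_v$, apply $(a+b)^{2}\le 2(a^{2}+b^{2})$, reorganize the double sum using that $\{U_u\cap U_v\}_{u\sim v}$ partitions $U_v$ up to a null set, and finally invoke the Poincaré inequality on each $U_v$ together with the $2$-fold identity to bound everything by a multiple of $\int_M|\nabla\phi|^{2}\,d\mu$. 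For the denominator, the orthogonality identity $\tilde\phi(v)^{2}\mu(U_v)=\int_{U_v}\phi^{2}\,d\mu-\int_{U_v}(\phi-\tilde\phi(v))^{2}\,d\mu$ combined with the same Poincaré inequality produces a lower bound of the form $2\int_M\phi^{2}\,d\mu-\tfrac{2}{\eta}\int_M|\nabla\phi|^{2}\,d\mu$.

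The main obstacle I anticipate is pinning down the constant $2$ stated in the theorem: the direct numerator/denominator estimate in the injective case only yields something like $R_V(\tilde\phi)\le 2R_M(\phi)/(\eta-R_M(\phi))$, and matching the advertised bound $2R_M(\phi)/\eta$ requires carefully combining the two cases, the non-injective case being used precisely when $R_M(\phi)$ is close to $\eta$. A secondary technical issue is that everything must be carried out uniformly for the three settings $(\mathscr C1)$, $(\mathscr C2)$, $(\mathscr C3)$; this boils down to verifying in each case that the Sobolev space, the Dirichlet energy $\int_{U_v}|\nabla\phi|^{2}\,d\mu$, the mean value on $U_v$, and the Poincaré inequality with constant $\eta$ are well defined and behave as expected (for metric graphs one uses the Kirchhoff Laplacian on edges, and for the conformal class $(\mathscr C2)$ one replaces the reference volume by $\mu$). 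Granted these standard ingredients, the transfer argument reduces to the averaging calculation outlined above.
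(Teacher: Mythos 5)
Your plan is the classical test-space transfer: average continuous (almost-)eigenfunctions over the $U_v$ and plug the averaged vector into the discrete Rayleigh quotient. The preliminary identities are correct ($d_v^\omega=\mu(U_v)$ for a $2$-fold cover, the summed Poincar\'e inequality, the decomposition of the numerator, and $\mu(U_v)\tilde\phi(v)^2=\int_{U_v}\phi^2\,d\mu-\int_{U_v}(\phi-\tilde\phi(v))^2\,d\mu$), but the obstacle you flag at the end is a genuine gap, not a technicality, and the dichotomy you propose does not close it. Carrying your estimates through gives, for every $\phi$ in the test space,
\[
R_V(\tilde\phi)\;\le\;\frac{(4/\eta)\int_M|\nabla_{\mathfrak g}\phi|^2\,d\mu_{\mathfrak g}}{2\int_M\phi^2\,d\mu-(2/\eta)\int_M|\nabla_{\mathfrak g}\phi|^2\,d\mu_{\mathfrak g}}\;=\;\frac{2R_M(\phi)}{\eta-R_M(\phi)},
\]
which exceeds $2R_M(\phi)/\eta$ for every $0<R_M(\phi)<\eta$; for instance, if $\lambda_k(M)=\eta/2$ the theorem asserts $\lambda_k(\mathcal L_{\mathcal U})\le 1$ while your bound collapses to the trivial estimate $2$. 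Injectivity of the averaging map is not the relevant issue: the loss comes from normalizing by the discrete norm $\sum_v\mu(U_v)\tilde\phi(v)^2$, which can be much smaller than $2\|\phi\|_{L^2(\mu)}^2$, and the kernel case only covers the regime $\lambda_k(M)\ge\eta$, where the trivial bound already suffices; if the map happens to be injective while $\lambda_k(M)$ is just below $\eta$, you are stuck with the weaker bound. What your argument does prove is $\lambda_k(\mathcal L_{\mathcal U})\le\min\bigl\{2,\;2\lambda_k(M)/(\eta-\lambda_k(M))\bigr\}\le 4\lambda_k(M)/\eta$, i.e.\ the theorem with constant $4$ instead of $2$ --- enough for the application to Theorem~\ref{thm:main}, but not the stated inequality.

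The paper obtains the clean constant by never forming the discrete Rayleigh quotient. With $\Phi\colon L^2(M,\mu)\to C(G)$ defined by $\Phi f(v)=\mu(U_v)^{-1/2}\int_{U_v}f\,d\mu$, the $2$-fold hypothesis yields the exact matrix identity $\mathcal L_{\mathcal U}=2I-\Phi\Phi^*$. Your denominator estimate (the summed Poincar\'e inequality) says precisely that $\|\Phi f\|^2\ge 2\bigl(1-(1+\epsilon)\lambda_k(M)/\eta\bigr)\|f\|_{L^2(\mu)}^2$ on a $(k+1)$-dimensional space of $\epsilon$-almost minimizers; by min--max the operator $\Phi^*\Phi$, hence also $\Phi\Phi^*$ (same nonzero spectrum, assuming the constant is positive, otherwise the claim is trivial), has $k+1$ eigenvalues at least this large, whence $\lambda_k(\mathcal L_{\mathcal U})\le 2(1+\epsilon)\lambda_k(M)/\eta$, and letting $\epsilon\to 0$ finishes. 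The point is that everything is normalized by $\|f\|_{L^2(\mu)}^2$ rather than by $\|\Phi f\|^2$, which is exactly what removes the $\eta-\lambda_k$ in the denominator, and no bound on the discrete Dirichlet form is needed at all. A further small correction: in case $(\mathscr C2)$ the variational infimum need not be attained by genuine eigenfunctions, so you should work with an $\epsilon$-almost minimizing $(k+1)$-dimensional space of smooth functions (as the paper does) rather than with the span of the first $k+1$ eigenfunctions.
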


The main difference with the classical versions of the transfer principle~\cite{B86, Br86, M05} is that we discretize the continuous Laplacian as a weighted normalized graph Laplacian instead of a
combinatorial one, which allows for a closer connection between the
two.  Our variant here uses a different notion of graph
approximation that involves particular weights. In addition, the above mentioned results take as input a partition of $M$, while our theorem is expressed in terms of  two-fold covers, which adds more flexibility. 

In order to prove Theorem~\ref{thm:main}, we apply the above theorem in the case where $(M, \mu)$ is a measured surface equipped with a conformal class of smooth 
Riemannian metrics $\g$. This version seems to
be required to get our Theorem~\ref{thm:main} on eigenvalues of bounded (geometric) genus
graphs.

\subsection{Organization of the paper}
The necessarily background on Laplacian eigenvalues in measured surfaces is recalled in Section~\ref{sec:surf}. 
The proof of Theorem~\ref{thm:2-fold} for measured surfaces (Case
($\mathscr C2$) among the above three cases) is given in
Section~\ref{sec:surf}. The proof in the two other cases is similar
and is thus omitted. Section~\ref{sec:surf} contains also the proof of
Theorem~\ref{thm:main}. In Section~\ref{sec:metric}, we apply Theorem~\ref{thm:2-fold} in the case of metric graphs with Lebesgue measure ($\mathscr C3$)),
  to obtain a uniform quantitative complement to a theorem of X. Faber~\cite{Faber} on the spectral convergence of finite graphs to
metric graphs. Moreover, we give in Section~\ref{sec:mesh} an algorithmic application of the above theorem to mesh partitioning in numerical analysis, 
generalizing the results of Miller-Teng-Thurston-Vavasis~\cite{MTTV98} and Spielman-Teng~\cite{ST07} to anisotropic meshes.

\section{Eigenvalues of bounded genus graphs}\label{sec:surf}
In this section we give the proofs of Theorem~\ref{thm:2-fold} and Theorem~\ref{thm:main}. We start by recalling the variational approach to study eigenvalue problems for 
surfaces with measures~\cite{Ko14}, which provides a setting to study eigenvalue problems for singular surfaces. 
This makes the statement of Theorem~\ref{thm:2-fold} precise in the case of a
measured metric surface.
 \subsection{Eigenvalues on measured surfaces}\label{sec:prel}
 Let $M$ be a smooth compact surface, possibly with boundary, which we suppose equipped with a smooth Riemannian metric 
$\g$. Denote by $\mu_\g$ the induced volume form on $M$. 
Let $\mu$ be a Radon measure on $M$ which we suppose 
absolutely continuous with respect to the measure $\mu_\g$.  
For a $C^\infty$-smooth function $f \in L^2(M,\mu)$, 
the Rayleigh quotient $R_{M_\g}(f, \mu)$ is defined by 
$$R_{M_\g}(f,\mu):= \frac{\int_M |\nabla_\g\, f|^2 d\mu_\g}{\int_M f^2 d\mu}.$$

The eigenvalues of the measured metric surface $(M_{\mathfrak g},\mu)$ are defined by the 
 variational formula:
\begin{equation}\label{eq:var1}
\lambda_k(M_\g, \mu) := \inf_{\Lambda_{k+1}}\, \sup_{f\in \Lambda^*_{k+1}} R_{M_\g}(f, \mu),
\end{equation}
where $\Lambda_{k+1} \subset L^2(M,\mu)$ varies over subspaces of dimension $k+1$ which consist
only of smooth functions on $M$, and 
$\Lambda^*_{k+1} = \Lambda_{k+1} \setminus \{0\}$. Note that in the
case $\mu =\mu_\g$, we recover the usual  
variational characterization of 
the eigenvalues of the Laplacian $\Delta_\g$ associated to the Riemannian surface $M_\g$.

To see the point of introducing this formalism, assume that the two metrics $\g$ and $\h$ on $M$ are 
conformally equivalent.  
From the conformal invariance of the Dirichlet integral, we
see that $R_{M_\g}(f,\mu) = R_{M_\h}(f, \mu)$. In particular, letting $\mu=\mu_\h$, we see that the spectra of
the metric $\h$ within the conformal class of $M_\g$ coincides with the spectra of
measured surface $(M_\g,\mu)$ for an appropriate Radon measure $\mu$. Now, if $\h$ is a
metric with conical singularities, it is a classical fact that $M$ is conformally equivalent to a constant 
curvature metric $\g$, the conformal factor being square integrable with respect to the
corresponding area form $\mu_\g$. 
Thus the framework of measured metric surfaces
allows in particular to define spectra of surfaces with conical singularities as the spectra of the measured metric surface 
$(M_\g, \mu_\h)$~\cite{Ko14}.

 Let $U$ be an open subset of $M$, and denote by $\overline U$ the topological closure of $U$ in $M$. 
 The Neumann value $\lambda(U)$ of $U$ is defined as the infimum of the Rayleigh ratio 
 $\int_U |\nabla_\g f|^2d\mu_\g/\int_U f^2 d\mu$ 
 over all smooth functions $f$ on $U$ which extend continuously to $\overline U$ and satisfy $\int_U f d\mu = 0$,
 \[\lambda(U):=\inf_{f:\,\,\,\int_U fd\mu=0} \,\,\frac{\int_U ||\nabla_\g f||^2 d\mu_\g}{\int_U f^2 d\mu}.\]

\subsection{Proof of Theorem~\ref{thm:2-fold}}
We suppose $M,\mu, \g$, and $\mu_g$ as above, and consider a 2-fold cover $\U = (U_v)_{v\in
  V}$ of $M$. Denote by $\eta$ the minimum of $\lambda(U_v)$ for $v\in V$. Let $G$ be 
  the associated weighted graph with vertex set $V$ and weight
matrix $W_{\U} = [\omega(\{u,v\})]_{u,v}$, where $\omega(\{u,v\}) = \mu(U_u\cap U_v)$ for $u\neq v$. Let
$\mathcal L_{\U}$ be the matrix of the associated normalized graph Laplacian. We have 
$\mathcal L_{\U}  =I - S_{\U}^{-1/2}W_{\U}S_{\U}^{-1/2}$, where the matrix $S_{\U}$
is diagonal with entries given by the weighted valences of the vertices $d^\omega_v = \sum_{u: u\sim v} \omega(\{u,v\})$.

\begin{proof}[Proof of Theorem~\ref{thm:2-fold}]
Let $v\in V$ and $f$ any smooth function on $M$.  By restricting $f$ to $U_v$ and
substracting the mean over $U_v$, we get:
\begin{eqnarray*}
\int_{U_v} ||\nabla_\g f||^2 \,d\mu_\g &\geq& \lambda(U_v) \int_{U_v} \bigl(f-\frac{1}{\mu(U_v)}\int_{U_v}fd\mu\bigr)^2\,d\mu\\
                                    &\geq& \eta\, \Bigl(\,\int_{U_v} f^2 \,d\mu-
                                    \frac{1}{\mu(U_v)}\,\bigl(\int_{U_v} f\, d\mu\bigr)^2\,\Bigr),\\
\end{eqnarray*}
Summing the last inequalities over $v\in V$ yields:
\begin{equation}\label{eq1}
\frac{2}{\eta}\int_M ||\nabla_\g f||^2 d\mu_\g \geq 2||f||_2^2 - \sum_v \frac{1}{\mu(U_v)}\bigl(\int_{U_v} fd\mu\bigr)^2,
\end{equation}
where the $L^2$ norm $||.||_2$ is with respect to the measure $\mu$. 
Denote by $\mathbf{1}_{U_v}$ the characteristic function of the open set $U_v$, and let 
$\phi_v = \mu(U_v)^{-1/2} \mathbf{1}_{U_v}$. Define $\Phi:
L^2(M)\to C(G)$, by 
$$\Phi(f)(v) := \int_M f\phi_v,$$
on any vertex $v$ of $G$. 
We see that the
quadratic form in $f$ in the right hand side of Equation~\eqref{eq1} is given by  $2||f||_2^2
- ||\Phi f||_2^2$. 

\medskip

Let $\epsilon>0$, and denote by $\Lambda_{k+1}^\epsilon$ a $(k+1)$-dimensional space of smooth functions on $M$ such that 
for any $f\in \Lambda_{k+1}\setminus \{0\}$, we have
\[ \frac{\int_M ||\nabla_\g f||^2d\mu_\g}{\int_M f^2 d\mu}\leq (1+\epsilon) \lambda_k(M).\]
Note that by the variational characterization of the eigenvalues
(see e.g.~\eqref{eq:var1} and~\eqref{eq:prin}), such a space exists.   
 For any $f\in \Lambda_{k+1}^\epsilon$, by inequality~\eqref{eq1}, we have:
$$
\frac{2(1+\epsilon)\lambda_{k}(M)}{\eta} ||f||_2^2 \geq
\frac{2}{\eta}\int_M ||\nabla_\g f||^2d\mu_\g \geq 2||f||_2^2 - ||\Phi f||_2^2.
$$ 
That is:
\begin{equation*}
||\Phi f||_2^2 \geq 2\Bigl(1-\frac{(1+\epsilon)\lambda_k(M)}{\eta}\Bigr) ||f||_2^2.
\end{equation*}
Let $\Phi^*$ denote the adjoint of the operator 
$\Phi: L^2(M) \rightarrow C(G)$.  From the variational characterization of the eigenvalues, this implies
that the compact self-adjoint operator $\Phi^* \Phi$ on $L^2(M)$ has at least $k+1$ eigenvalues
greater than or equal to $2(1-(1+\epsilon)\lambda_k(M)/\eta)$. We can assume that this latter quantity is positive, otherwise
there is nothing to prove since all the eigenvalues of the normalized
Laplacian are at most $2$. Since the non zero eigenvalues of $\Phi^*\Phi$
are the same as the non zero 
eigenvalues of $\Phi\Phi^*$, we thus deduce that 
\begin{equation}\label{eq2}
\lambda_k(2I - \Phi\Phi^*) \leq \frac{2(1+\epsilon)\lambda_k(M)}{\eta}.
\end{equation}

To conclude the proof, it suffices to notice that 
$$
[\Phi\Phi^*]_{u,v} = \int_M \phi_u\phi_v = \frac{\mu(U_u\cap U_v)}{(\mu(U_u)\mu(U_v))^{1/2}}.
$$
Because $\U$ is a 2-fold cover, the $v$-th entry of the diagonal matrix
$D_\U$ is equal to $\mu(U_v)$. We thus easily check that $2I - \Phi\Phi^* = \mathcal L_\U$. Therefore, inequality~\eqref{eq2} 
gives
\[\lambda_k(\mathcal L_\U) \leq \frac{2(1+\epsilon)\lambda_k(M)}{\eta}.\]
Since this holds for any $\epsilon>0$, the theorem follows.
\end{proof}

\subsection{Proof of Theorem~\ref{thm:main}}
We first give some background about graphs embedded in a surface, and refer to~\cite{MT01} for more details. We assume that all surfaces are compact, orientable and without boundary. An
\emph{embedding} of a graph $G$ in a surface $M$ is a drawing of $G$ on
$M$ so that all vertices of $G$ are distinct on $M$, and every edge of
$G$ form a simple arc on $M$ connecting its two endpoint
vertices. Interior of edges and vertices are assumed  to be pairwise
disjoint. A \emph{face} of and embedding, or simply a face of $G$ if
the embedding is clear from the context, is a connected component of the complementary of $G$ in $M$. 

An embedding is called \emph{cellular} if every face is homeomorphic to an open disk in $\R^2$.

 The genus $g(G)$ is the minimum integer $g$ such that $G$ has an embedding in a surface $M$ of genus $g$. The following result will allow us to suppose that a graph $G$
with a given genus $g(G)$ is embedded in a cellular way.

\begin{prop}[{\cite[Proposition 3.4.1]{MT01}}]
Every embedding of a connected graph $G$ in a surface of genus $g(G)$ is cellular. 
\end{prop}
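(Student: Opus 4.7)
The plan is to argue by contradiction. Suppose the given embedding of the connected graph $G$ in a surface $M$ with $g(M) = g(G)$ is not cellular, so at least one face is not homeomorphic to an open disk. I will produce from this an embedding of $G$ in a closed orientable surface of strictly smaller genus, which contradicts the definition of $g(G)$ as the minimum genus of an embedding.

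The first step is to pass from the open faces of $M \setminus G$ to compact surfaces-with-boundary. Taking a small closed regular neighborhood $N$ of $G$ in $M$, one has $M = N \cup \bigcup_i \overline{F_i}$, where each $\overline{F_i}$ is a compact, connected, orientable surface-with-boundary glued to $\partial N$ along its boundary circles, and each $\overline{F_i}$ is homotopy equivalent to exactly one face of the embedding. In particular, a face is homeomorphic to an open disk if and only if the corresponding $\overline{F_i}$ is homeomorphic to a closed disk, so by hypothesis there is some $\overline{F_1}$ with $k \geq 1$ boundary circles and genus $h \geq 0$ satisfying $(h,k) \neq (0,1)$.

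From this I build a new surface $M'$ by excising the interior of $\overline{F_1}$ from $M$ and capping off each of the $k$ resulting boundary circles by a disk. Since $M$ is closed and orientable and the $k$ disks can be attached orientation-compatibly, $M'$ is again closed and orientable; and since the surgery takes place away from $N \supset G$, the graph $G$ is still embedded in $M'$. A direct computation using $\chi(\overline{F_1}) = 2 - 2h - k$ gives
\[
\chi(M') - \chi(M) \;=\; k - \chi(\overline{F_1}) \;=\; 2h + 2k - 2 \;\geq\; 0,
\]
with equality only in the excluded case $(h,k) = (0,1)$. Hence $\chi(M') > \chi(M)$, so the genus of $M'$ is strictly less than $g(M) = g(G)$, contradicting the minimality of $g(G)$.

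The only step that is not pure Euler-characteristic bookkeeping is the construction of the regular neighborhood $N$ and the identification of the complementary pieces $\overline{F_i}$ as compact orientable surfaces-with-boundary whose boundary circles all sit on $\partial N$; this is the standard tubular neighborhood theorem for a $1$-complex embedded in a surface, and it is where one must be careful to ensure that capping off boundary circles yields a genuine closed orientable surface rather than a singular object. Once this geometric input is in hand, everything else follows from additivity of Euler characteristic and the classification of closed orientable surfaces.
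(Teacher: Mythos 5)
Your proof is correct. Note that the paper does not actually prove this proposition --- it is imported verbatim from Mohar--Thomassen \cite{MT01} --- so the comparison is with the cited source, and your surgery argument is essentially the standard one: excise the compact complementary piece $\overline{F_1}$ corresponding to a non-disk face and cap its $k$ boundary circles with disks, so that $\chi(M')-\chi(M)=2h+2k-2\geq 2$ (using $k\geq 1$ and $(h,k)\neq(0,1)$, and noting the jump is even, as it must be), whence the genus strictly drops while $G\subset N\subset M\setminus \mathrm{int}\,\overline{F_1}\subset M'$, contradicting minimality. Two points deserve to be made explicit. First, $M'$ is connected: $M\setminus \mathrm{int}\,\overline{F_1}$ is the union of the regular neighborhood $N$ (connected because $G$ is) with the remaining pieces, each of which meets $\partial N$, and capping with disks preserves connectedness; without this, ``the genus of $M'$'' and the contradiction with $g(G)$ would need an extra step. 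Second, your dictionary between faces and pieces is slightly understated: ``homotopy equivalent'' alone would not yield the asserted ``open disk iff closed disk,'' and one must also rule out a single face containing several pieces $\overline{F_i}$. Both follow from the collar structure $N\setminus G\cong\partial N\times[0,1)$ of a regular neighborhood of a graph in a surface, which shows that each face is homeomorphic to the interior of the unique complementary piece it contains; since you explicitly flag the regular neighborhood theorem as the one nontrivial geometric input, this is a matter of presentation rather than a gap.
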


Suppose from now on that  the connected graph $G$ is embedded in a cellular way in  a surface $M$ of genus $g$, so that every face $F$ is homeomorphic to an open disk $D_F$
in $\R^2$. The boundary of the face $F$ in $M$ is the image of the
boundary $\partial D_F \simeq S^1$ under a continuous map, which is
locally a homeomorphism away from the preimage of the vertices. We denote by $\mathcal F$ the set of all faces of
$G$. For any face $F \in \mathcal F$, we define a \emph{boundary walk} of $F$ to be any walk in the graph $G$ consisting of vertices and edges as
they are encountered when walking along the whole boundary of $F$, following the circle $\partial D_F$, and starting at some vertex. Note that some edges may appear more than once in a boundary walk. The \emph{degree} of a face $F \in \mathcal F$, denoted $\deg(F)$, is the number of edges on any boundary walk of
$F$.

\medskip

We define a new multigraph $\mathscr G = (\mathscr V, \mathscr E)$ embedded in $M$, and containing $G$ as an induced subgraph, by coning over boundary walks of faces as follows. The vertex set  $\mathscr V$ of $\mathscr G$ consists of the vertices in $G$ and a new vertex $v_F$ for each face $F$ of $ \mathcal F$, i.e., $\mathscr V =  V \sqcup \bigl\{v_F\bigr\}_{F\in \mathcal F}$.
For each face $F$ of $\mathcal F$, let $v_1, \dots, v_{\deg(F)}$ be
the vertices of $G$ which appear in this order in a boundary walk of
$F$. Note that a vertex might appear more than once.  
The edge set $\mathscr E$  of $\mathscr G$ consists of the edges in
$E$, and new edges $\{v_F, v_i\}, $ for ${i=1, \dots, \deg(F)}$. The embedding of $\mathscr G$ in $M$ is obtained in the following natural way: each face $F\in \mathcal F$ is homeomorphic to a disk $D_F$ in $\R^2$, and under this homeomorphism, the vertices $v_1, \dots, v_{\deg(F)}$ in the boundary walk of $F$, appear in this cyclic order on the boundary of $D_F$. 
Choose the center of $D_F$ as the image of $v_F$ and the rays from
$v_F$ to $v_i$ as the image of the edges $\{v_F, v_i\}$. We refer to
all the new edges $\{v_F, v_i\}$ added in the process 
 as {\it cone edges} of $\mathscr G$.

Call an embedding of a graph in $M$ a \emph{weak triangulation} if the
degree of any face of the embedding is three. We use this terminology
since it can happen that two different faces of the embedding share
more than one edge, in which case we do not have a triangulation. 

The embedded (multi)graph $\mathscr G$ constructed above has the following properties.
\begin{prop} \label{prop:basic1} The embedding of $\mathscr G$ in $M$
  is a weak triangulation, and each face of $\mathscr G$ is incident
  to exactly one edge of $G$. Moreover, for any vertex $v$ of $G$, we have $d^{\,\mathscr G}_v = 2d^G_v$, where $d_v^{\,\mathscr G}$ and $d^G_v$ denote the valence of $v$ in $\mathscr G$ and $G$, respectively. 
\end{prop}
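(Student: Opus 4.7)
The plan is to verify the three assertions by exploiting the local picture of the cellular embedding near each face and near each vertex.

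First I would establish the weak triangulation claim together with the ``one edge of $G$ per face'' claim by a purely local argument inside each face $F$ of $G$. By construction, $F$ is homeomorphic to the open disk $D_F \subset \R^2$, and on $D_F$ the vertices $v_1,\dots,v_{\deg(F)}$ of the boundary walk appear in this cyclic order on $\partial D_F$. Placing $v_F$ at the center of $D_F$ and drawing straight rays from the center to each $v_i$ partitions $D_F$ into $\deg(F)$ closed sectors, each of which is a topological triangle whose boundary consists of: one arc of $\partial D_F$ between $v_i$ and $v_{i+1}$ (which is, in $M$, precisely one edge of $G$ appearing in the boundary walk of $F$) together with the two cone edges $\{v_F,v_i\}$ and $\{v_F,v_{i+1}\}$. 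The interiors of these sectors give exactly the faces of $\mathscr G$ contained in $F$, so each such face has degree three and is incident to exactly one edge of $G$. Since every face of $\mathscr G$ lies inside some face of $G$, this gives both the weak triangulation property and the second claim simultaneously.

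Next I would prove the valence identity $d^{\,\mathscr G}_v=2d^G_v$. The edges of $\mathscr G$ incident to a vertex $v\in V$ split into two disjoint groups: the edges of $G$ incident to $v$, which contribute exactly $d^G_v$, and the cone edges $\{v_F,v\}$. By construction the cone edges incident to $v$ are in bijection with occurrences of $v$ in boundary walks, ranging over all faces $F\in\mathcal F$. So what I need is that the total number of such occurrences equals $d^G_v$.

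The key step is this last bijection, which is the standard ``corner count'' for cellular embeddings. I would argue it by looking at a small open disk around $v$ in $M$: the $d^G_v$ half-edges of $G$ incident to $v$ cut this disk into $d^G_v$ open sectors (with loops at $v$ contributing two half-edges, hence being counted with the correct multiplicity); each sector is a neighborhood of $v$ inside exactly one face $F$, and it corresponds to exactly one occurrence of $v$ in the boundary walk of $F$, because walking along $\partial D_F$ one enters this sector precisely once at $v$. Summing over faces then gives that $v$ appears in the boundary walks a total of $d^G_v$ times, hence contributes $d^G_v$ cone edges, and the total valence is $d^G_v+d^G_v=2d^G_v$.

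The main obstacle is bookkeeping rather than depth: one must be careful that a vertex which appears $k$ times in the boundary walk of a single face $F$ genuinely contributes $k$ distinct (parallel) cone edges in the multigraph $\mathscr G$, and that loops of $G$ at $v$ are counted twice in $d^G_v$ so that the corner count comes out right. Once the local disk picture at $v$ is set up cleanly and the sectors are identified with occurrences in boundary walks, the three statements follow directly.
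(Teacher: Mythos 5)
Your proposal is correct and follows essentially the same route as the paper: the triangulation and one-$G$-edge-per-face claims come from the sector decomposition of each face $D_F$, and the valence identity comes from identifying cone edges at $v$ with occurrences of $v$ in boundary walks and counting those to be $d^G_v$. The only (immaterial) difference is in the bookkeeping of that count --- you count corners of faces at $v$ via a small disk around $v$, while the paper counts edge-occurrences, using that each edge incident to $v$ appears exactly twice in the boundary walks of the faces around $v$; also, since $G$ is assumed simple, your caveat about loops is unnecessary.
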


\begin{proof}
By definition of the embedding, each face of $\mathscr G$ consists of two cone edges and an edge of $G$, which proves the first assertion.  To prove the second statement, let  $F_1, \dots, F_k \in \mathcal F$ be all the faces of $G$ which are incident to the vertex $v \in V$.  For each $i=1, \dots, k$, the number of edges $\{v_{F_i}, v\}$ in $\mathscr E$ is half the number of edges of $G$ in a boundary walk of $F_i$.  Each edge $e \in E$ incident to $v$ appears precisely  twice in the union of the edges of the boundary walks of $F_1, \dots, F_k$.   This shows that the total number of edges of $\mathscr E$ of the form $\{v_{F_i}, v\}$, for $i=1, \dots, k$, is equal to $d^G_v$, which proves the claim. \end{proof}

\begin{defi}[Open star] \rm For each vertex $v$ of $G$, we define the
  \emph{open star} of $v$ in $M$  with respect to the embedding of
  $\mathscr G$, or simply the open star of $v$, denoted by $\mathscr
  S_v$, as the interior of the union of all the faces of $\mathscr G$ which contain $v$ in their boundaries.  
\end{defi}

Let now $(M,\mathfrak g, \mu)$ be any surface as in Section~\ref{sec:prel},  so the measure $\mu$ is absolutely continuous with respect to the volume form $\mu_{\mathfrak g}$ of the smooth Riemannian metric $\mathfrak g$.  
\begin{prop}\label{prop:bas2} The open stars of vertices of $G$ form a 2-fold cover of $M$. 
\end{prop}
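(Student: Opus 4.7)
The plan is to verify the two requirements in the definition of a $2$-fold cover: that each $\mathscr S_v$ is open with $\mu(\mathscr S_v)>0$, and that $\mu$-almost every point of $M$ belongs to exactly two of the sets $\mathscr S_v$, $v\in V$. Openness is built into the definition, while $\mu(\mathscr S_v)>0$ follows because $\mathscr S_v$ contains any face of $\mathscr G$ incident to $v$ — a non-empty open subset of $M$ — and $\mu$ charges every non-empty open set in $M$, being absolutely continuous with respect to the volume form $\mu_{\mathfrak g}$ of the smooth Riemannian metric $\mathfrak g$.

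For the covering multiplicity, I would first discard a $\mu$-null set. After replacing the topological embedding of $\mathscr G$ in $M$ by a smooth isotopic one if necessary, the $1$-skeleton of $\mathscr G$ is a finite union of smooth arcs in the surface $M$, hence has $\mu_{\mathfrak g}$-measure zero; by absolute continuity, it also has $\mu$-measure zero. It therefore suffices to check that any point $p$ in the interior of some face $F$ of $\mathscr G$ is covered by exactly two open stars. By Proposition~\ref{prop:basic1}, $\mathscr G$ is a weak triangulation, and the three vertices on the boundary of $F$ consist of two vertices $u, v \in V$ — the endpoints of the unique $G$-edge on $\partial F$ — together with a single cone vertex $v_{F_0} \notin V$. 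Since $F$ is an open subset of $M$ whose closure belongs to the collection of faces whose union defines $\overline{\mathscr S_u}$ and $\overline{\mathscr S_v}$, the point $p$ lies in the interior of each of these unions, giving $p \in \mathscr S_u \cap \mathscr S_v$.

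To conclude, I would rule out $p\in\mathscr S_w$ for any $w\in V\setminus\{u,v\}$. The point $p$ lies in the open face $F$, and for any face $F'\neq F$ of $\mathscr G$ we have $p\notin \overline{F'}$, because distinct open faces are disjoint and their boundaries lie in the $1$-skeleton, which avoids $p$. In particular, $p$ is not contained in the closure of any face of $\mathscr G$ incident to $w$, so $p\notin\overline{\mathscr S_w}\supset\mathscr S_w$, as desired. The only real content of the argument is the combinatorial input from Proposition~\ref{prop:basic1} — that each face of the refined graph $\mathscr G$ carries precisely two $G$-vertices on its boundary — after which the 2-fold covering property is a direct consequence of the cellular structure, modulo the routine step of discarding the measure-zero $1$-skeleton; this is also the mildest potential pitfall, namely being careful with ``interior of union of closed faces'' versus ``union of open faces''.
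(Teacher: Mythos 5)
Your argument for the covering multiplicity matches the paper's: by Proposition~\ref{prop:basic1} each face of $\mathscr G$ is a triangle with exactly two $G$-vertices on its boundary, so every point off the (measure-zero) $1$-skeleton lies in exactly two open stars. The extra care you take --- distinguishing ``interior of a union of closed faces'' from ``union of open faces,'' and checking that no third star can contain $p$ --- is correct and more explicit than the one-line published proof.

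The one place where your reasoning goes wrong is the positivity check $\mu(\mathscr S_v)>0$. You assert that $\mu$ charges every non-empty open set ``being absolutely continuous with respect to $\mu_{\mathfrak g}$,'' but this has the implication backwards: $\mu\ll\mu_{\mathfrak g}$ only says that $\mu_{\mathfrak g}$-null sets are $\mu$-null, and is perfectly consistent with $\mu$ vanishing on an open set of positive $\mu_{\mathfrak g}$-measure (take $d\mu = \mathbf{1}_A\, d\mu_{\mathfrak g}$ for a measurable $A$ that misses some face of $\mathscr G$). To guarantee $\mu(\mathscr S_v)>0$ one needs the Radon--Nikodym derivative $d\mu/d\mu_{\mathfrak g}$ to be positive a.e., i.e.\ the reverse relation $\mu_{\mathfrak g}\ll\mu$. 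This does hold in the application, where $\mu=\mu_{\mathfrak h}$ for a conical metric $\mathfrak h$ conformal to $\mathfrak g$ with a.e.\ positive conformal factor, but the reason is equivalence of the two measures, not the one-sided absolute continuity you cited. (The paper's own proof of this proposition silently omits the positivity verification entirely.)
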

\begin{proof}
By Proposition~\ref{prop:basic1}, the boundary walk of each face of
$\mathscr G$ is a triangle which has exactly two vertices in $G$. It
follows that every point of $M\setminus \mathscr G$ appears in exactly
two open stars, which proves the claim by absolute continuity of $\mu$ with respect to $\mu_\mathfrak g$. 
\end{proof}

We now introduce a metric $\h$ on $M$ with conical singularities (and
will later assume $\mu  =\mu_\h$). For reasons that will soon become clearer, we set the length of each 
edge of $E\subset \mathscr E$ to be equal to one, and the length of
each cone edge in $\mathscr E$ to be $\cos(\pi/(2\dm))^{-1}/2$. We
equip $M$ with the natural metric $\h$ such the triangles have the
Euclidean metric induced by their edge lengths. Note that for any
triangle $T$ of $\mathscr G$, the angle of $T$ at any of its vertices
that belongs to the graph $G$ is equal to $\pi/(2\dm)$.  The metric $\h$ has only conical singularities, and we denote by $\lambda_k(M)=\lambda_k(M_\h)$, 
the eigenvalues of the surface $M$  as defined in Section~\ref{sec:prel}. Thus, $\lambda_k(M)$ is the $k$-th eigenvalue of the measured metric surface $(M_\g,\mu=\mu_\h)$, where $\g$ is a metric of constant curvature in the conformal class of $\h$.

Using Theorem~\ref{thm:2-fold} we can relate the eigenvalues
of $M$ to those of $G$. Denote by $\U$ the $2$-fold cover of $M$ given
by the open stars $\mathscr S_v$ of vertices of $V$,
c.f. Proposition~\ref{prop:bas2}. The intersection of two distinct open stars
 $\mathscr S_u$ and $\mathscr S_v$, for two vertices $u$ and $v$ of
 $G$, has non zero measure if and only if $u$ and $v$ are neighbors in
 $G$. Moreover, all non-empty intersections have the same
 measure, equal to the area of two triangles in $M$. Therefore, the normalized Laplacian $\mathcal L_{\U}$ equals
 the normalized Laplacian of $G$. Hence, in order to apply the transfer result Theorem~\ref{thm:2-fold}, we only need to lower bound the Neumann value of the open stars $\mathscr S_v$ of the vertices of $G$.

  We do so by 
again applying the transfer result to a specific 2-fold
cover of each open star $\mathscr S_v$, for $v\in V$.  Thanks to the
choice of edge lengths, the vertices of $G$ in $(M,\h)$  have non negative curvature.
It follows that by cutting $\mathscr S_v$ along an arbitrary cone edge, we can unfold $\mathscr S_v$  to the plane without overlap. Denote by $\mathscr S'_v$ the unfolded star of $v$ as isometrically embedded in $\R^2$. The cutting operation can only decrease the Neumann
value so it is sufficient to bound from below the Neumann value of the unfolded open subset
$\mathscr S'_v$ of $\R^2$. 

We call a \emph{kite} in $\mathscr S_v'$ the union of two triangles in $\mathscr S_v'$ which share an edge of $G$. So for any edge $e=\{v,u\} \in E$, there is a kite $K_e$, and the union of the kites $K_e$ for $e$ incident to $v$ is equal to the planar set $\mathscr S_v'$. For any edge $e\in E$ incident to $v$, the kite $K_e$ has two diagonals composed of the edge $e$ and the diagonal opposite to $e$, that we denote by $\mathrm{diag}^{\mathrm{op}}_e$. Cut $\mathscr S_v'$ along all the opposite diagonals $\mathrm{diag}^{\mathrm{op}}_e$ for $e \in E$ incident to $v$. This cuts $\mathscr S'_v$ into an open region $P_v$ with polygonal boundary containing the
vertex $v$, together with one triangle $T_e$ for each kite $K_e$ for $e\in E$ incident to $v$. 
Define 
$$\U_v := \bigl\{K_e\bigr\}_{e=\{v,u\}\in E} \cup \bigl\{T_e\bigr\}_{e=\{v,u\}\in E} \cup\bigl\{P_v\bigr\}.$$ 
The cover $\U_v$ of $\mathscr S'_v$ is a 2-fold cover. 

\begin{prop}Any $X \in \U_v$ is a convex set of diameter at most two.
\end{prop}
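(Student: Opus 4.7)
The plan is to verify convexity and the diameter bound separately for the three types of element of $\U_v$---the kites $K_e$, the triangles $T_e$, and the central polygon $P_v$---using only planar trigonometry. The first thing I would record is the common geometry of the pieces: by the law of sines, each triangle of $\mathscr{G}$ is isoceles with base of length $1$ (the edge of $G$), base angles $\pi/(2d_{\max})$, apex angle $\pi-\pi/d_{\max}$, and cone edges of length $r:=1/(2\cos(\pi/(2d_{\max})))$.

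I would first dispatch the easy pieces. Each kite $K_e$ is glued from two such triangles along $e$, so it is in fact a rhombus with side $r$ and diagonals $1$ (the edge $e$) and $\tan(\pi/(2d_{\max}))$ (the opposite diagonal $\mathrm{diag}^{\mathrm{op}}_e$, via the law of cosines). Its interior angles are $\pi/d_{\max}$ and $\pi-\pi/d_{\max}$, both at most $\pi$, so $K_e$ is convex, and its diameter is the longer diagonal, bounded by $\max(1,\tan(\pi/(2d_{\max})))\leq 1$ whenever $d_{\max}\geq 2$. Each triangle $T_e$ is one half of such a rhombus cut along $\mathrm{diag}^{\mathrm{op}}_e$, so it is automatically convex with sides bounded by the same quantity.

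The main case is the central region $P_v$. I would describe it as the planar region obtained by gluing, along common cone edges emanating from $v$, the $d_v$ ``near-$v$'' half-kites---isoceles triangles with apex angle $\pi/d_{\max}$ at $v$ and two legs of length $r$---in the cyclic order of faces around $v$, linearised by the initial cut used to unfold $\mathscr{S}_v$. Since the total angle at $v$ inside $P_v$ is $d_v\cdot\pi/d_{\max}\leq\pi$, the half-kites fit around $v$ inside a half-plane without overlap and $v$ is a convex corner. At any other (cone) boundary vertex $v_F$, the interior angle of $P_v$ is either the single base angle $(\pi-\pi/d_{\max})/2$ (when $v_F$ sits at one of the two ends of the unfolded strip) or the sum of two such angles, namely $\pi-\pi/d_{\max}$ (when it is shared between two adjacent half-kites); in either case this is at most $\pi$, so $P_v$ is convex. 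Finally, since every boundary vertex of $P_v$ other than $v$ lies at distance exactly $r$ from $v$ inside a sector of opening at most $\pi$, $P_v$ is contained in a half-disk of radius $r$, so its diameter is at most $2r=1/\cos(\pi/(2d_{\max}))\leq\sqrt{2}<2$. The main obstacle will be the bookkeeping at these cone boundary vertices: one has to confirm that the angles computed above really are interior (and not reflex) angles, which ultimately reduces to the inequality $d_v\pi/d_{\max}\leq\pi$ on the total angle at $v$ together with the preservation of the cyclic ordering of faces under the unfolding.
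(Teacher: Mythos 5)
Your proof is correct and takes essentially the same approach as the paper: you split $\U_v$ into the kites $K_e$, the triangles $T_e$, and the central polygon $P_v$, and reduce the convexity of $P_v$ to the angle bound $d_v\,\pi/d_{\max}\leq\pi$ at $v$ coming from $d_v^G\leq d_{\max}$. The only difference is that you explicitly verify the interior angles at the cone vertices $v_F$ as well (and compute explicit diameters), whereas the paper leaves these implicit when it asserts that checking the angle at $v$ is ``enough.''
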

\begin{proof}  The triangle $T_e$, for $e=\{u,v\} \in E$, is obviously
  convex of diameter one, and so is the kite $K_e$. As for the region $P_v$, to prove the convexity of $P$, it will be enough to show that 
the angle of $P$ at $v$ is at most $\pi$. As we previously observed, by the choice of the edge lengths, all the triangles of $\mathscr G$ has angle $\pi/2\dm$ at any of their vertices which belong to $G$. The number of triangles of $\mathscr S'_v$ is at most $2d^G_v$, since $d_v^G \leq \dm$, it follows that the angle at $v$ of $P$ is at most $\pi$, and the convexity follows. The claim on the diameter follows from the fact that all the edges of $G$ have length equal to one. 
\end{proof}
Since any element of $\U_v$ is a planar convex sets  of 
diameter at most two,  there exists a universal constant $C_1>0$ such
that for any $X \in \U_v$, we have $\lambda(X) \geq C_1$ for
$\lambda(X)$ the Neumann value of $X$ \cite{PW60}.  For the 2-fold cover of $\mathscr S'_v$,  the non-zero element
$\mu_\h(X\cap Y)$ for $X\neq Y \in \U_v$ have the same value, equal to
the area of a triangle in $M$. Therefore, the normalized Laplacian
$\mathcal L_{\U_v}$ equals the normalized Laplacian of the graph whose
edges are the pairs $X\neq Y \in \U_v$ whose intersection has positive
measure. This graph $S^1_d$ is obtained from the star graph with $d$
edges by
inserting a new vertex in the middle of each edge. (Recall that a star graph with $d$ edges has a central vertex connected to $d$ other vertices.) The Neumann value
of $S^1_d$ is lower bounded by an absolute constant $C_2>0$ independent of $d$. Hence, applying the transfer theorem~\ref{thm:2-fold} to the 2-fold cover $\U_v$, it follows that there exists a universal constant $C_3 = C_1.C_2/2>0$ such that  the Neumann value of $\mathscr S_v'$ is bounded from below by $C_3$, i.e., $\lambda(\mathscr S'_v) \geq  C_3$. This gives $\lambda(\mathscr S_v) \geq  C_3.$

We get from these observations, and Theorem~\ref{thm:2-fold} applied to the 2-fold cover of $\U$, that 
\begin{align}\label{eq*}
\lambda_k^{\mathrm{nr}} = \lambda_k(\mathcal L_\U) \leq 2 \frac{\lambda_k(M)}{C_3}.
\end{align}

A result of Hassannezhad \cite{H11} states now that there is a universal constant $A$
such that for each $k$:
\begin{align}\label{eq**}
\lambda_k(M)\mu(M) \leq A(g+k).
\end{align}
Note that this result is not explicitly stated in the framework of measured metric surfaces in~\cite{H11}, 
however the proof given in~\cite{H11} works also in this setting and gives the above statement. Putting Equations~\eqref{eq*} and \eqref{eq**} together, and observing
that $\mu(M) \geq C_4  n/\dm$ for some constant $C_4$, we conclude
that for $C = 2A/(C_3C_4)$, we have
\[\lambda^{\mathrm{nr}} (G) \leq C \frac{\dm (g+k)}{n},  \]
which is the statement of Theorem~\ref{thm:main}. \qed

\begin{remark}\label{rem:tightness}\rm 
It is shown in \cite{CE03} that
 for large $g$, there are area one and genus $g$ Riemannian surfaces $S$ 
 with $$\lambda_k(S) \geq \frac{4\pi}{5} (g-1) + 8\pi (k-1) -\epsilon$$
for any $\epsilon>0$. Now, the classical Brooks-Burger method implies the existence of a bounded degree
genus $g$ graph $G$ with $n$ vertices such that $\lambda_k(G) \geq C
\lambda_k(S)/n$. Hence, at least for large enough $n$ and $g$, there
are graphs whose eigenvalues match the behaviour of the estimate in Theorem~\ref{thm:main}.
\end{remark}

 \begin{remark}\label{rem:mc}\rm The following example shows that the strong estimates as in Theorem~\ref{thm:main} cannot hold for more general classes of graphs closed under taking minor. 

Recall that the Cartesian product  $G_1 \square G_2$ of two graphs $G_1 = (V_1, E_1)$ and $G_2=(V_2, E_2)$ has vertex set $V_1\times V_2$ and there is an edge between $(v_1,v_2) $ and 
$(u_1,u_2)$ in $V_1\times V_2$ if either $u_1=v_1$ and $\{u_2,v_2\}\in E_2$, or $u_2=v_2$ and $\{u_1,v_1\} \in E_1$. The Laplacian eigenvalues of $G_1 \square G_2$ are of the form $\lambda_i(G_1) + \lambda_j(G_2)$ for $i=1,\dots, |V_1|$ and $j=1,\dots, |V_2|$. 

Let $d$ be a fixed large enough integer, and for any  $\ell \in \mathbb N$, consider the Cartesian product  $C_{2l} \square G$ of a cycle $C_{2\ell}$ of length $2l$ with a $d$-regular
 graph $G$ on $t$ vertices, for an  integer $t\in \mathbb N$.   
 
 For any fixed $t\in \mathbb N$, we get in this way a family of
 graphs by varying $\ell$ and $G$. All these graph are of treewidth bounded by some $f(t)$ for a (linear) function $f$ of $t$. Bounded treewidth graphs form a minor-closed family, so all these graphs belong to a fixed proper minor-closed family $\mathcal F_t$. For $G$ a random $d$-regular graph on $t$ vertices, and for the ${l}^{\mathrm{th}}$
 eigenvalue of $C_{2l} \square G \in \mathcal F_t$, for $l\in \mathbb
 N$, we have $\lambda_l (C_{2\ell}\square G) =
 \Omega(\frac{tl}{|C_{2l}\square G|})$ with high probability as $t$
 tends to infinity. This shows that there do not exist in general
 constants $h = h(\mathcal F_t)$ and $C = C(\mathcal F_t)$ associated
 to $\mathcal F_t$ ensuring that the inequality  $\lambda_k(G) \leq C \, \dm^2 \, (g_t+ k)/n$ hold  for any  graph $G \in \mathcal F_t$ on $n$ vertices, and for any $k \in \mathbb N$ (unlike what happens for
 the class of bounded genus graphs). In particular, the strong estimates as in Theorem~\ref{thm:main} cannot hold for general minor-closed classes of graphs. 
 \end{remark}

\section{Eigenvalues of the Laplacian on metric graphs}\label{sec:metric}
We briefly review the basic definitions concerning the spectral theory of metric graphs, and refer e.g. to~\cite{BR07, Zhang} for more details.

Let $G=(V,E)$ be a finite connected graph and let $\ell: E \rightarrow \mathbb R_{>0}$ be a (length) function on the edges of $G$. The length of $e$ is denoted by $\ell_e$. We define the \textit{metric realization} of $(G, \ell)$ as follows: for each edge $e =uv$ of $G$ take a closed
interval $I_e\subset \mathbb R$ of length $\ell_e$, and a surjection $\pi_e: \partial I_e \rightarrow \{u,v\}$ (which identifies the two extremities of $I_e$ with the vertices of $G$ in $e$).  Define the topological space (with the quotient topology)
$$\Gamma : = (V \sqcup \bigsqcup_e I_e) /\bigl\{\,x=\pi_e(x)\,\,\, \forall e\in E \,\,\&\,\, x\in \partial I_e \,\bigr\}.$$
The space $\Gamma$ has a natural metric, the shortest path metric induced by piecewise isometric paths between points, see e.g.~\cite{BR07}.  We call a \emph{metric graph} any metric space $\Gamma$ isometric to a metric realization of a pair $(G,\ell)$, as above. The pair $(G,\ell)$ is called a  \emph{model} of $\Gamma$; when $G$ is a simple graph, the model is called simple.  Note that there are plenty of models for a metric graph $\Gamma$, e.g. any finite subset of points of $\Gamma$ can be part of a simple model of $\Gamma$.

For any point $p\in \Gamma$, we denote by $T^1_p\Gamma$ the set of
\emph{unit tangent vectors} to $\Gamma$ at $p$.  For an interval $I =
[a,b]$, in $\R$, we define $T^1_aI = \{\vec 1\}$, with $\vec 1$ the
unit vector in $\R$. For a metric graph $\Gamma$ and a point $p\in
\Gamma$, let $(G, \ell)$ be a simple model of $\Gamma$ with $p\in
V(G)$, and let $e_1, \dots, e_d$ be the edges of $G$ incident to
$v$. Define $T^1_p\Gamma$ as the set of all unit tangent vectors at
$p$ of the intervals $I_{e_j}$, as above. Let $\vec u \in T^1_p\Gamma$ be a unit tangent vector, and let $I=I_e$ be the corresponding interval (corresponding to the edge $e$ of a simple graph model $(G,\ell)$). For $\epsilon>0$ sufficiently small, we denote by $p+ \epsilon \vec u$
the unique point in $I$ at distance $\epsilon$ from $p$ on $I$. A function $f: \Gamma \rightarrow \R$ is \emph{piecewise smooth} if there exists a simple graph model $G=(V,E)$ of $\Gamma$ such that the restriction of $f$ to the intervals $I_e$, for $e\in E$, are of class  $C^2$.   The space of piecewise smooth function on $\Gamma$ is denoted by
$S(\Gamma)$. Let $f: \Gamma \rightarrow \R$ be a piecewise smooth function on a metric graph $\Gamma$. Let $p\in \Gamma$ and $\vec u \in T^1_x\Gamma$ a unit tangent vector to at $x$. The \emph{(outgoing) 
 slope} of $f$ along $\vec u$ denoted by $d_{\vec u}(f)$ is  defined by
 \[d_{\vec u}(f) := \lim_{\epsilon \to 0^+} \frac{f(p+ \epsilon \vec u) - f(p)}{\epsilon}.\]
For a point $p\in \Gamma$, we define $\sigma_p$ as the sum of the slopes of $f$ along unit tangents:
$$\sigma_p := \sum_{\vec u \ni T^1_p\Gamma} d_{\vec u}f( p ),$$
Note that for all but at most a finite number of points $p\in \Gamma$, we have $\sigma_p =0$.  A metric graph $\Gamma$ has a natural Lebesgue measure denoted by $dx$. The Laplacian of $\Gamma$ is the (measure valued) operator $\Delta$ on $\Gamma$ which to a function $f\in S(\Gamma)$ associates the measure 
$$\Delta(f):= - f''dx -\sum_{p \in \Gamma} \sigma_p \delta_p.$$

Define the \emph{Zhang space} $\mathrm{Zh}(\Gamma)$ as the space of all functions $f \in S(\Gamma)$ such that $f'' \in L^1(\Gamma, dx)$. 
The inner product $(\,,)$ and the Dirichlet pairing $(\,,)_{\mathrm{Dir}}$ on $\mathrm{Zh}(\Gamma)$ are defined by
$$\forall f,g\in \mathrm{Zh}(\Gamma), \qquad (f,g) := \int_{\Gamma} fg\,\, dx, \,\, \, \textrm{ and }$$ 
$$(f,g)_{\mathrm{Dir}} := \int_{\Gamma} f \Delta(g) = \int_{\Gamma}g \Delta(f) = \int_{\Gamma} f'g' \,dx =(f',g').$$
 
 A function $f$ in $\mathrm{Zh}(\Gamma)$ is an
  eigenfunction of the Laplacian on $\Gamma$ with eigenvalue $\lambda$ if for any
  function $g\in \Zh(\Gamma)$, we have $(f,g)_{\mathrm{Dir}} = \lambda
  (f,g)$. The eigenvalues of $\Delta$ are all nonnegative and,
  assuming $\Gamma$ is connected, they form a
  discrete subset $0=\lambda_0(\Gamma) <\lambda_1(\Gamma)<\lambda_2(\Gamma) <\dots< \lambda_n(\Gamma)<\dots$ of $\mathbb R$. 
In addition, $\lambda_k(\Gamma)$ has the following (usual) variational characterization:
\begin{equation}\label{eq:prin}
\lambda_k(\Gamma) = \inf_{\substack{\Lambda_{k+1} \subset \mathrm{Zh}(\Gamma)\\ \dim(\Lambda_{k+1}) = k+1}}\,\, \sup_{f\in \Lambda_{k+1}} \frac{\,\,\,(f,f)_{\mathrm{Dir}}}{(f,f)}
\end{equation}
\begin{defi}[Dilation of a metric graph]\rm
Let $\Gamma$ be a metric graph with a simple graph model $(G, \ell)$, and $\beta\in \mathbb R_{>0}$. The metric graph $\beta \Gamma$ is defined as the metric realization of the pair $(G, \beta \ell)$.
\end{defi}
The following proposition is straightforward, see e.g.~\cite{BR07}.
\begin{prop} Let $\Gamma$ be a metric graph and $\beta >0$ a real.
For any integer $k\geq 0$, we have $\lambda_k(\beta \Gamma) = \frac 1{\beta^2}\lambda_k(\Gamma)$. 
\end{prop}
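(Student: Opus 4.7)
The plan is to exploit the variational characterization~\eqref{eq:prin} together with a natural isometry-like identification between $\Zh(\Gamma)$ and $\Zh(\beta\Gamma)$ induced by the scaling of the underlying model. First I would fix a simple graph model $(G,\ell)$ of $\Gamma$, so that $\beta\Gamma$ is by definition the metric realization of $(G,\beta\ell)$. There is a canonical bijection $\Phi: \Gamma \to \beta\Gamma$ which, on each edge interval $I_e$ of length $\ell_e$, is just multiplication of the arclength parameter by $\beta$, sending $I_e$ isometrically (up to a factor $\beta$) to the corresponding interval $I_e^\beta$ of length $\beta\ell_e$.

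Next I would use $\Phi$ to transport functions: to every $f \in \Zh(\Gamma)$ associate $\tilde f \in \Zh(\beta\Gamma)$ by $\tilde f := f \circ \Phi^{-1}$. The key observation, which is a routine one-dimensional change of variables on each edge, is that the Lebesgue measures satisfy $d(\beta x) = \beta\, dx$, while arclength derivatives transform by $\tilde f'(\Phi(x)) = \beta^{-1} f'(x)$. Combining these gives
\[
(\tilde f, \tilde f)_{\beta\Gamma} = \beta\,(f,f)_{\Gamma}, \qquad (\tilde f, \tilde f)_{\mathrm{Dir},\beta\Gamma} = \beta^{-1} (f,f)_{\mathrm{Dir},\Gamma},
\]
so the Rayleigh quotient scales as
\[
\frac{(\tilde f, \tilde f)_{\mathrm{Dir},\beta\Gamma}}{(\tilde f, \tilde f)_{\beta\Gamma}} = \frac{1}{\beta^2}\,\frac{(f,f)_{\mathrm{Dir},\Gamma}}{(f,f)_{\Gamma}}.
\]

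Finally, since $f \mapsto \tilde f$ is a linear bijection between $\Zh(\Gamma)$ and $\Zh(\beta\Gamma)$ which preserves dimensions of subspaces, plugging the above identity into the min-max formula~\eqref{eq:prin} on both sides yields $\lambda_k(\beta\Gamma) = \beta^{-2}\,\lambda_k(\Gamma)$. I do not foresee any genuine obstacle: the only point deserving a moment of care is checking that $\Phi$ sends $\Zh(\Gamma)$ bijectively onto $\Zh(\beta\Gamma)$ (i.e. the piecewise smoothness and the $L^1$ condition on $f''$ are preserved), but this is immediate from the edgewise change of variables $f''$ picking up a factor $\beta^{-2}$ and $dx$ picking up $\beta$.
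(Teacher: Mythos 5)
Your proof is correct. The paper itself offers no argument here — it simply declares the proposition straightforward and points to Baker–Rumely — and your edgewise change of variables, showing the Dirichlet form picks up a factor $\beta^{-1}$ while the $L^2$ norm picks up $\beta$ and then feeding the resulting Rayleigh-quotient scaling into the min–max characterization~\eqref{eq:prin} via the dimension-preserving bijection $f\mapsto f\circ\Phi^{-1}$, is exactly the standard argument the authors have in mind.
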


By a \emph{metric star} $\mathscr S$ we mean the metric realization of a pair $(S_d, \ell)$ with $S_d =K_{1,d}$ a star graph of arbitrary valence $d$, and $\ell$ a length function on $E(S_d)$. For such a metric star, define  $\ell_{\max}(\mathscr S) := \max_{e\in E(S_d)} \ell(e)$.

\begin{lemma}\label{lem:star}
For any metric star $\mathscr S$, we have:
$$
\lambda_1(\mathscr S) \geq \frac{\pi^2}{4\ell^{\,2}_{\max} (\mathscr S)}.
$$
\end{lemma}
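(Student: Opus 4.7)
The plan is to reduce the eigenvalue lower bound on $\mathscr S$ to a one-dimensional Poincaré inequality on each edge, using the value of the test function at the central vertex as the natural reference constant. First I would invoke the variational formula~\eqref{eq:prin}: since $\lambda_0 = 0$ is simple with constant eigenfunctions, $\lambda_1(\mathscr S)$ equals the infimum of the Rayleigh quotient
$$
\frac{\int_\mathscr{S} (f')^2\,dx}{\int_\mathscr{S} f^2\,dx}
$$
over nonzero $f\in \Zh(\mathscr S)$ with $\int_\mathscr S f\,dx = 0$. It thus suffices to prove, for every such $f$, the estimate $\int_\mathscr S f^2\,dx \leq (4\ell_{\max}^{\,2}/\pi^2)\, \int_\mathscr S (f')^2\,dx$.

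Next, let $v_0$ denote the central vertex of $\mathscr S$, set $c := f(v_0)$, and decompose $f = c + h$, so that $h$ is piecewise smooth on $\mathscr S$, vanishes at $v_0$, and satisfies $h' = f'$ edgewise. Parametrize each edge $e_i$ of $\mathscr S$ as $[0,\ell_i]$ with $0$ corresponding to $v_0$. Then $h|_{e_i}$ vanishes at $0$, so the mixed Dirichlet--Neumann Wirtinger inequality --- equivalently, the fact that the smallest eigenvalue of $-u''=\lambda u$ on $[0,L]$ with boundary condition $u(0)=0$ and no condition at $L$ is $\pi^2/(4L^2)$ --- yields
$$
\int_0^{\ell_i} h^2\,dx \;\leq\; \frac{4\ell_i^{\,2}}{\pi^2} \int_0^{\ell_i} (h')^2\,dx \;\leq\; \frac{4\ell_{\max}^{\,2}}{\pi^2}\int_0^{\ell_i}(h')^2\,dx.
$$
Summing over the edges of $\mathscr S$ gives $\int_\mathscr S h^2\,dx \leq (4\ell_{\max}^{\,2}/\pi^2) \int_\mathscr S (f')^2\,dx$.

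Finally, I would use the mean-zero constraint to pass back from $h$ to $f$: expanding gives
$$
\int_\mathscr S f^2\,dx \;=\; c^2 \vol(\mathscr S) + 2c\int_\mathscr S h\,dx + \int_\mathscr S h^2\,dx,
$$
and substituting $c\cdot \vol(\mathscr S) = -\int_\mathscr S h\,dx$ (which follows from $\int_\mathscr S f = 0$) collapses this to $\int_\mathscr S f^2 = -c^2 \vol(\mathscr S) + \int_\mathscr S h^2 \leq \int_\mathscr S h^2$. Combining with the previous display yields the desired Rayleigh estimate. The only substantive point in the argument is the choice of the correct one-dimensional inequality: using the pure Neumann or pure Dirichlet Wirtinger constant on each edge yields $L^2/\pi^2$ and gives the wrong factor, whereas the mixed Dirichlet--Neumann constant $4L^2/\pi^2$ --- matching the fact that $h$ is constrained only at the interior endpoint of each edge --- gives exactly the factor $4$ appearing in the statement.
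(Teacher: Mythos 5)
Your proof is correct, and it takes a genuinely different route from the paper's. The paper adapts Friedlander's explicit computation \cite[Example 3]{Fr}: an eigenfunction must be of the form $a_e\cos(\sqrt{\lambda}\,x_e)$ on each edge (zero slope at the leaves), and one then distinguishes whether its value at the centre vanishes, forcing $\cos(\sqrt{\lambda}\,\ell_e)=0$ and hence $\sqrt{\lambda}\,\ell_e\ge \pi/2$, or not, in which case the Kirchhoff condition at the centre gives $\sum_e \tan(\sqrt{\lambda}\,\ell_e)=0$ and again some $\sqrt{\lambda}\,\ell_e\ge\pi/2$. Your argument is purely variational: you reduce $\lambda_1(\mathscr S)$ to the Rayleigh quotient over mean-zero $f$ (for the inequality you actually need, $\lambda_1\ge\inf_{\int f=0}R(f)$, it suffices to intersect any $2$-dimensional test space in~\eqref{eq:prin} with the hyperplane $\{\int_{\mathscr S} f\,dx=0\}$, so simplicity of $\lambda_0$ is not even required), subtract the central value $c=f(v_0)$, apply on each edge the sharp mixed Dirichlet--Neumann inequality $\int_0^{L}u^2\,dx\le \frac{4L^2}{\pi^2}\int_0^{L}(u')^2\,dx$ valid for $u(0)=0$ with no condition at $L$, and use the mean-zero constraint to get $\int_{\mathscr S} f^2\,dx=-c^2\,\vol(\mathscr S)+\int_{\mathscr S}h^2\,dx\le\int_{\mathscr S}h^2\,dx$; all these steps check out, and the constant $4L^2/\pi^2$ is indeed the correct one (pure Neumann or Dirichlet would give the wrong factor, as you note). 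The continuity of functions in $\Zh(\mathscr S)$ is what guarantees that $h=f-c$ vanishes at the central endpoint of every edge, which is worth saying explicitly. What each approach buys: yours avoids solving the eigenvalue ODE and any discussion of matching conditions, and is robust (it is a test-function estimate in the spirit of the Neumann-value bounds used elsewhere in the paper); the paper's computation is closer to an exact spectral analysis of the metric star and makes visible that the bound is essentially attained when one edge dominates. Both yield the same sharp constant $\pi^2/4$.
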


\begin{proof} Assume that $\mathscr S$ is the metric realization of a pair $(S_d, \ell)$ with $d\in \mathbb N$.   We adapt the argument in ~\cite[Example 3]{Fr} to the case where the branches
of $\mathscr S$ have non-necessary equal lengths.  Let us parametrize each each edge $e$ of $S_d$ with the interval $[0, \ell_e]$ starting from the leaf vertex towards the central vertex of $S_d$. In this
parametrization, an 
eigenfunction $\phi$ of the Laplacian, with corresponding eigenvalue $\lambda$, must be of the form $a_e
\cos(\sqrt{\lambda} x_e)$, where $x_e$ is the length parameter of
the edge $e$ in $\mathscr S$, for $e\in E(S_d)$. This follows in particular from the fact that
the slope of an eigenfunction must be zero at leaves. Now let $a$ be
the value of the eigenfunction $\phi$ at the center of $\mathscr S$. If $a=0$, we get that $\sqrt{\lambda} \ell_e \in
\pi/2 + \mathbb{N}$, for any edge $e$, which implies the claim. If $a$ is non zero, then
we use the fact that at the center of $\mathscr S$, the sum of the (out-going) slopes of $\phi$ along
the branches must be zero~\cite[Proposition 15.1]{BR07}, which gives
$$
\sum_{e\in E(S_d)} a_e \sin(\sqrt{\lambda} \ell_e) = 0.
$$
Since $a_e\cos(\sqrt{\lambda} \ell_e) = a$ for any edge $e$ of $S_d$, this implies
$$
\sum_{e\in E(S_d)} \tan(\sqrt{\lambda} \ell_e) = 0,
$$
and so, again, at least one of the arguments in the tangents must be at least
$\pi/2$, and the lemma follows.
\end{proof}

For a simple graph $G$ and a vertex $v\in V(G)$, we denote by $\Sigma_G(v)$ the star subgraph of $G$ with central vertex $v$ and with the edge set all the incident edges to $v$.  Let $(G,\ell)$ be a simple graph model of a metric graph $\Gamma$. For any $v\in V(G)$, we define the \emph{metric star with center $v$} (with respect to $G$)  of $\Gamma$  denoted by $\mathscr S_G(v)$, or simply $\mathscr S_v$ if there is no risk of confusion, as the subset of $\Gamma$ isometric to the metric realization of $\Sigma_G(v)$ with length function given by $\ell$. 
Denote by $\ell_{\max,G}$ the maximum length of edges in $G$, and note that $\ell_{\max,G} = \max_{v\in V(G)}\{\ell_{\max}(\mathscr S_v)\}$.

Given a simple graph model $(G,\ell)$ of a metric graph $\Gamma$, the family of all the metric stars $\mathscr S_v$, for $v\in V(G)$,
forms a 2-fold cover $\mathcal S$ of $\Gamma$. Denote by
$\lambda_k^{nr}(G,\ell)$ the $k$-th eigenvalue of $\mathcal{L_S}$. 

Lemma~\ref{lem:star} together with Theorem~\ref{thm:2-fold} yields the
following bound:

\begin{thm}\label{cor1} 
Let $\Gamma$ be a metric graph with a simple graph model $(G, \ell)$. For any $k\in \mathbb N$, we have
$$\lambda_k(\Gamma) \geq \frac{\pi^2}{8\ell_{\max,G}^2}\,\lambda_k^{nr}(G,\ell) .$$
\end{thm}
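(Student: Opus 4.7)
The plan is to derive this bound as a direct application of the transfer principle, Theorem~\ref{thm:2-fold}, in case $(\mathscr{C}3)$, using the $2$-fold cover $\mathcal{S}$ of $\Gamma$ by the metric stars $\mathscr{S}_v$, $v \in V(G)$, which was already identified in the paragraph preceding the statement.

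The one input needed is a uniform lower bound on the Neumann value of each element of $\mathcal{S}$. By Lemma~\ref{lem:star}, for each $v \in V(G)$ the metric star $\mathscr{S}_v$ satisfies
\[
\lambda_1(\mathscr{S}_v) \,\geq\, \frac{\pi^2}{4\,\ell_{\max}(\mathscr{S}_v)^{\,2}}.
\]
Since $\ell_{\max}(\mathscr{S}_v) \leq \ell_{\max,G}$ by the very definition of $\ell_{\max,G}$, this gives the uniform bound $\lambda_1(\mathscr{S}_v) \geq \eta$ for every $v$, where $\eta := \pi^2/(4\,\ell_{\max,G}^{\,2})$. Here I implicitly use that the Neumann value of the open set $\mathscr{S}_v$, defined as the infimum of Rayleigh ratios over mean-zero test functions, coincides with the first nonzero eigenvalue $\lambda_1(\mathscr{S}_v)$ of the metric star computed in the variational sense of Section~\ref{sec:metric}.

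With this in hand, Theorem~\ref{thm:2-fold} applied to $(M,\mu) = (\Gamma, dx)$ and to the $2$-fold cover $\mathcal{S}$ yields
\[
\lambda_k^{nr}(G,\ell) \,=\, \lambda_k(\mathcal{L}_{\mathcal{S}}) \,\leq\, \frac{2\,\lambda_k(\Gamma)}{\eta} \,=\, \frac{8\,\ell_{\max,G}^{\,2}}{\pi^2}\,\lambda_k(\Gamma),
\]
and rearranging gives the desired inequality.

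I do not foresee any real obstacle: the genuine work has been done in establishing Theorem~\ref{thm:2-fold} and Lemma~\ref{lem:star}. The only point that deserves a sentence of care is matching the two notions of Neumann value (the Rayleigh infimum used in Theorem~\ref{thm:2-fold} versus $\lambda_1$ of the metric star used in Lemma~\ref{lem:star}), but these agree because a metric star with Neumann conditions at its leaves has its first nontrivial Dirichlet form eigenvalue equal to the infimum of the Rayleigh quotient over mean-zero functions in $\mathrm{Zh}(\mathscr{S}_v)$.
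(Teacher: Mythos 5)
Your proposal is correct and is essentially the paper's own argument: the paper obtains this theorem precisely by combining Lemma~\ref{lem:star} (giving the uniform Neumann lower bound $\eta=\pi^2/(4\ell_{\max,G}^2)$ for the stars $\mathscr S_v$) with Theorem~\ref{thm:2-fold} applied to the $2$-fold cover $\mathcal S$ of $\Gamma$, exactly as you do. Your extra remark identifying the Neumann value of $\mathscr S_v$ with $\lambda_1$ of the metric star is the right point of care and is consistent with how the paper uses these notions.
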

 We now show that under certain natural conditions, it is possible to achieve eigenvalue
  upper bounds closely matching the lower bounds of the above corollary.
For a simple graph model $(G,\ell)$ of $\Gamma$ denote by $\ell_{\min,G}$ the minimum length of edges $e$ in $E(G)$. 
\begin{defi} \rm 
 A simple graph model of a metric graph $\Gamma$ is called \emph{length-balanced} if for any edge $e\in E(G)$, we have $\ell_e\leq 2 \ell_{\min,G}$. 
\end{defi}
We have the following theorem. 
\begin{thm}\label{thm:main2} There are absolute constants $c_1,c_2>0$ such that for any length-balanced simple graph model $(G, \ell)$ of $\Gamma$ on $n$ vertices, and for any non-negative integer $k\leq n-1$, we have 
$$\frac {c_2}{d_{\max}} {\ell^2_{\min,G}} \, \lambda_{k}(\Gamma) \leq  \lambda_k^{\nr}(G,\ell) \leq c_1 \ell^2_{\min,G} \, \lambda_k(\Gamma).$$
\end{thm}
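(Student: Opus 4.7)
The strategy is to prove the two inequalities separately. The right-hand bound $\lambda_k^{\nr}(G,\ell)\leq c_1\ell_{\min,G}^2\lambda_k(\Gamma)$ is immediate from Theorem~\ref{cor1} combined with the length-balanced hypothesis: that theorem gives $\lambda_k^{\nr}(G,\ell)\leq (8/\pi^2)\,\ell_{\max,G}^2\,\lambda_k(\Gamma)$, and $\ell_{\max,G}\leq 2\ell_{\min,G}$ converts this into the desired form with $c_1=32/\pi^2$.

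For the left-hand bound I would use the variational characterization~\eqref{eq:prin} of $\lambda_k(\Gamma)$, with test functions built from functions on $V(G)$ by piecewise linear interpolation. For $h:V(G)\to\R$, let $\tilde h\in \Zh(\Gamma)$ be the unique continuous function equal to $h$ on $V(G)$ and affine on every edge interval $I_e$. Evaluating the two standard integrals over a linear segment gives
$$(\tilde h,\tilde h)_{\mathrm{Dir}}=\sum_{e=\{u,v\}}\frac{(h(u)-h(v))^2}{\ell_e},\qquad (\tilde h,\tilde h)=\sum_{e=\{u,v\}}\frac{\ell_e}{3}\bigl(h(u)^2+h(u)h(v)+h(v)^2\bigr).$$
After the change of variables $f=S_{\mathcal S}^{1/2}h$, the Rayleigh quotient of $\mathcal L_{\mathcal S}$ at $f$ equals $\bigl(\sum_e \ell_e(h(u)-h(v))^2\bigr)/\bigl(\sum_v d_v^{\omega}h(v)^2\bigr)$, where $d_v^{\omega}=\sum_{u\sim v}\ell_{\{u,v\}}$. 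The task thus reduces to a pointwise comparison of Rayleigh quotients.

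Two elementary estimates do the job. Since $\ell_e\geq\ell_{\min,G}$ one has $1/\ell_e\leq \ell_e/\ell_{\min,G}^2$, so that $(\tilde h,\tilde h)_{\mathrm{Dir}}\leq\ell_{\min,G}^{-2}\sum_e\ell_e(h(u)-h(v))^2$. For the denominator, the inequality $a^2+ab+b^2\geq (a^2+b^2)/2$ yields $(\tilde h,\tilde h)\geq\tfrac16\sum_v d_v^{\omega}h(v)^2$. Combining the two gives the pointwise bound $R_\Gamma(\tilde h)\leq (6/\ell_{\min,G}^2)\, R_{\mathcal L_{\mathcal S}}(h)$ for every $h$.

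To conclude, pick $h_0,\ldots,h_k$ to be orthogonal eigenvectors of $\mathcal L_{\mathcal S}$ associated to its first $k+1$ eigenvalues; the lifts $\tilde h_0,\ldots,\tilde h_k$ are linearly independent in $\Zh(\Gamma)$ because evaluation at $V(G)$ recovers the $h_i$. Taking the $(k+1)$-dimensional span as test subspace in~\eqref{eq:prin} and applying the Rayleigh quotient comparison yields $\lambda_k(\Gamma)\leq (6/\ell_{\min,G}^2)\,\lambda_k^{\nr}(G,\ell)$, which implies the stated lower bound (the factor $1/d_{\max}$ in the theorem being harmless slack). The delicate step in the argument is the denominator comparison: the weights $d_v^{\omega}$ produced by the $L^2$ norm of the piecewise linear lift must match the weights that normalize $\mathcal L_{\mathcal S}$, which is exactly why one has to work with the weighted normalized graph Laplacian rather than with an unweighted combinatorial version.
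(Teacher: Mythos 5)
Your proof is correct, and while the right-hand inequality is handled exactly as in the paper (Theorem~\ref{cor1} plus $\ell_{\max,G}\leq 2\ell_{\min,G}$), your argument for the left-hand inequality takes a genuinely different route. The paper proceeds in two steps: it first compares the weighted normalized spectrum with the \emph{unweighted} combinatorial spectrum, $\lambda_k^{\nr}(G,\ell)\geq \tfrac{1}{2d_{\max}}\lambda_k(G)$ (this is where both the factor $1/d_{\max}$ and the length-balanced hypothesis enter), and then transfers $\lambda_k(G)$ to $\lambda_k(\Gamma)$ via a map $\Psi$ which is constant on balls of radius $1/(4d_v)$ around each vertex and affine on the middle segments $[u_e,v_e]$ -- the plateau radius $\propto 1/d_v$ is precisely what forces the $L^2$ mass to produce the unweighted sum $\sum_v g(v)^2$. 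You instead compare directly with the weighted normalized Laplacian using plain linear interpolation: the exact edge computation $\tfrac{\ell_e}{3}(a^2+ab+b^2)\geq\tfrac{\ell_e}{6}(a^2+b^2)$ makes the denominator reproduce the weighted degrees $d_v^\omega=\sum_{u\sim v}\ell_{uv}$ automatically, so no plateaus and no passage through $\lambda_k(G)$ are needed. This buys you a strictly stronger conclusion, $\lambda_k^{\nr}(G,\ell)\geq \tfrac{1}{6}\ell_{\min,G}^2\,\lambda_k(\Gamma)$, with no $1/d_{\max}$ loss and, for that inequality, no use of the length-balanced hypothesis (only $\ell_e\geq\ell_{\min,G}$), confirming your remark that the $1/d_{\max}$ in the statement is slack; the paper's two-step route is what introduces it. One small repair: since your pointwise comparison is phrased in terms of the generalized Rayleigh quotient $\bigl(\sum_e\ell_e(h(u)-h(v))^2\bigr)/\bigl(\sum_v d_v^\omega h(v)^2\bigr)$, the functions you lift should be $h_i=S_{\mathcal S}^{-1/2}f_i$ with $f_0,\dots,f_k$ the first eigenvectors of $\mathcal L_{\mathcal S}$ (equivalently, eigenvectors of $\Delta_{\mathcal S}h=\lambda S_{\mathcal S}h$), not the eigenvectors of $\mathcal L_{\mathcal S}$ themselves; on the span of the $h_i$ the generalized quotient is indeed bounded by $\lambda_k^{\nr}(G,\ell)$, injectivity of the lift is unchanged, and the rest of your argument goes through verbatim.
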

Before giving the proof, we state an interesting corollary of the above theorem. We first need the following definition. 
\begin{defi} \rm Let $\Gamma$ be a metric graph. Define $\ell_{\min}$ as the supremum of $\ell_{\min,G}$ over all length-balanced simple graph models $(G, \ell)$ of $\Gamma$.
\end{defi}

 It is easy to see that there is a length-balanced simple graph model $G$ of $\Gamma$ such that $\ell_{\min}=\ell_{\min,G}$. For such a simple graph model $(G,\ell)$, define the model $(G_k,\ell)$ as the $k$-th subdivision of $G$ where each edge $e$ is subdivided into $k$ edges of equal lengths $\ell_e/k$. Note that $G_k$ is length-balanced, has at least $k+1$ vertices, and has minimum edge length equal to $\ell_{\min}/k$. Thus as a consequence of Theorem~\ref{thm:main2}, we get
\begin{cor} \label{thm:main3} With the notations as above, there are absolute constants $c_1$ and $c_2$ such that 
for any metric graph $\Gamma$, we have 
$$ \frac {c_2}{d_{\max}} \ell_{\min}^2 \lambda_k(\Gamma) \leq k^2 \lambda_k^{\nr}(G_k,\ell) \leq c_1 \ell_{\min}^2 \lambda_k(\Gamma).$$
\end{cor}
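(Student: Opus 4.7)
The plan is a direct rescaling argument: apply Theorem~\ref{thm:main2} to the subdivided model $(G_k,\ell)$ and invoke the uniform scaling of edge lengths under subdivision.

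Concretely, the preceding paragraph exhibits a length-balanced simple graph model $(G,\ell)$ of $\Gamma$ attaining $\ell_{\min,G}=\ell_{\min}$, and defines $G_k$ as the uniform $k$-fold subdivision. I would first verify that $(G_k,\ell)$ meets all hypotheses of Theorem~\ref{thm:main2}. It is a simple graph model of the same metric graph $\Gamma$; its edge lengths are $\ell_e/k$ for $e\in E(G)$, so $\ell_{\min,G_k}=\ell_{\min}/k$ and $\ell_{\max,G_k}=\ell_{\max,G}/k$, whence the length-balance condition $\ell_{\max,G_k}\leq 2\ell_{\min,G_k}$ is inherited from $G$. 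The subdivision inserts only vertices of valence $2$, so $d_{\max}(G_k)=d_{\max}(G)$. Finally, $G_k$ has at least $k+1$ vertices (as observed in the text, provided $\Gamma$ contains at least one edge), so the range $k\leq |V(G_k)|-1$ required by Theorem~\ref{thm:main2} is satisfied.

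With these verifications in place, Theorem~\ref{thm:main2} applied to $(G_k,\ell)$ yields
\[\frac{c_2}{d_{\max}}\,\ell_{\min,G_k}^{\,2}\,\lambda_k(\Gamma) \;\leq\; \lambda_k^{\nr}(G_k,\ell) \;\leq\; c_1\,\ell_{\min,G_k}^{\,2}\,\lambda_k(\Gamma).\]
Substituting $\ell_{\min,G_k}=\ell_{\min}/k$ and multiplying through by $k^2$ produces exactly the two-sided inequality in the corollary statement, with the same absolute constants $c_1,c_2$.

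There is no serious obstacle here: the corollary is a bookkeeping reduction to Theorem~\ref{thm:main2}, and the only point worth briefly flagging is the existence of a length-balanced model realizing $\ell_{\min,G}=\ell_{\min}$, which the paper asserts in the sentence immediately preceding the corollary.
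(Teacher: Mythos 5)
Your proposal is correct and is essentially the same bookkeeping reduction the paper makes: verify that $(G_k,\ell)$ is a length-balanced model with $\ell_{\min,G_k}=\ell_{\min}/k$ and at least $k+1$ vertices, then apply Theorem~\ref{thm:main2} and multiply by $k^2$. The only tiny caveat worth noting is that $d_{\max}(G_k)=\max(d_{\max}(G),2)$ rather than exactly $d_{\max}(G)$ when $G$ has leaves, but this affects the constant by at most a factor of $2$ and does not change the statement.
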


Our results, especially corollary~\ref{thm:main3} above, should be viewed as a quantitative complement to a theorem of X. Faber~\cite{Faber} on the spectral convergence of finite graphs to
metric graphs, in the sense that they provide uniform upper and lower
bounds on the eigenvalues of $\Gamma$ in terms of eigenvalues of
simple graph models of $\Gamma$. 

\begin{proof}[Proof of Theorem~\ref{thm:main2}] First note that since $\lambda_k(\beta \Gamma) = \frac 1{\beta^2}\lambda_k(\Gamma)$ and since $\lambda^{\nr}_k(G, \beta\ell) = \lambda_k^{\nr}(G, \ell)$, by the very definition, it will be enough to prove the theorem for $\ell_{\min} =1$.  

The right hand side inequality follows from Theorem~\ref{cor1}, 
and the well-balanced property of the simple graph model $G$ of $\Gamma$. We now prove the other inequality, namely the existence of $c_2$ such that for any $k\leq n-1$, ${c_2} \, \lambda_{k}(\Gamma) \leq d_{\max}\lambda_k^{\nr}(G,\ell)$ (still under the assumption that $\ell_{\min}=1$ and the length-balanced property of the model $(G,\ell)$).  Since the lengths of all edges are between 1 and 2, we get $\lambda_k^{\nr}(G, \ell) \geq \frac 1{2d_{\max}} \lambda_k(G)$. Indeed, letting $g= D_{\mathcal S}^{1/2}f$, we have the following expression for the Rayleigh quotient 
$$\frac{(g, \mathcal L_{\mathcal S}g)}{(g,g)} = \frac{\sum_{e=\{u,v\}\in E} \ell(e) (f(u)-f(v))^2}{\sum_{v} d^\ell_v f(v)^2} \geq \frac{1}{2d_{\max}} \frac{\sum_{e=\{u,v\}\in E} (f(u)-f(v))^2}{\sum_{v} f(v)^2}$$
(where $d^\ell_v = \sum_{u: u\sim v} \ell(\{u,v\})$), which using the variational characterization of the eigenvalues proves the claim. So it will be enough to show the existence of a constant $c_2'$ such that 
$$c_2' \lambda_k(\Gamma) \leq \lambda_k(G).$$

Consider $W_{k+1}$ the vector space of dimension $k+1$ generated by the first $k+1$ 
eigenfunctions $g_0, \dots, g_k \in C(G)$ associated to $\lambda_i(G)$, for $i=0,\dots, k$. 
Note that in particular
$$\lambda_k(G) \geq  \sum_{\substack{u,v \in V(G)\\ u\sim v}}\frac{(g(u) -g(v))^2}{\sum_v g(v)^2}$$ 
for any $g\in W_{k+1} \setminus \{0\}$.  We construct an injective linear
map $\Psi: C(G) \rightarrow \Zh(\Gamma)$ such that for any  $g \in C(G) \setminus \{0\}$, we have 
$$\frac{\,\,\,(\Psi(g),\Psi(g))_{\mathrm{Dir}}}{(\Psi(g),\Psi(g))} \leq  8  \sum_{\substack{u,v \in V(G)\\ u\sim v}}\frac{(g(u) -g(v))^2}{\sum_v g(v)^2}.$$
Applying the variational characterization of $\lambda_k(\Gamma)$, given in Equation~\eqref{eq:prin}, to the test space $\Psi(W_k)$, for $k\leq n-1$, will then give the result. 

\medskip 

Consider an edge $e = \{u,v\}$ of $G$, and denote by $u_e$ and $v_e$ the two points at distance 
$\frac 1{4d_u}$ and $\frac 1{4d_v}$ from $u$ and $v$ on $e$, respectively, where $d_u$ and $d_v$ denote the valence of the vertices $u$ and $v$ in $G$, respectively.  Note that the length of each segment $[u_e, v_e]$ in $\Gamma$ is at least $\frac 12$. 

For any vertex $v$ of $G$, denote by $B_v$ the union of all segments $[v,v_e]$ on the edges $e$ adjacent to $v$ in $G$ (i.e., $B_v$ is the ball of radius $\frac 1{4d_v}$ around $v$ in $\Gamma$). For any function $g \in C(G)$, 
defined on the set of vertices of $G$, let $\Psi(g)$ be the function on $\Gamma$ which takes value equal to $g(v)$ on each ball $B_v$, and which is affine linear of slope $(g(v) - g(u))/\ell([u_e,v_e])$ on each segment $[u_e,v_e]$, 
for any edge $e\in E(G)$. Obviously, $\Psi$ is an injective linear map from $C(G)$ to $\Zh(\Gamma)$. 

Let now $g\in C(G) \setminus \{0\}$ and denote $f = \Psi(g)$. We have 
$$(f, f)_{\mathrm{Dir}} = \int_{\Gamma} f'^2 dx = \sum_{e=\{u,v\}\in E(G)} 
\frac 1{\ell([u_e,v_e])}(g(u)-g(v))^2 \leq 2\sum_{\{u,v\}\in E(G)} (g(u)-g(v))^2.$$

Denote by $B$ the union $\cup_{v\in V(G)} B_v$. Since each ball $B_v$ has  total length equal
to $1/4$, we have 
$$\int_\Gamma f^2 dx \geq \int_{B}f^2 dx = \frac 14 \sum_{v\in V(G)} g(v)^2.$$

It thus follows from the two above estimates that for any $g\in C(G)\setminus \{0\}$, we have 
$$\frac{\,\,\,(\Psi(g),\Psi(g))_{\mathrm{Dir}}}{(\Psi(g),\Psi(g))} \leq   8 \sum_{\substack{u,v \in V(G)\\ u\sim v}}\frac{(g(u) -g(v))^2}{\sum_v g(v)^2},$$ and the theorem follows. 
\end{proof}

\section{Anisotropic mesh partitioning}\label{sec:mesh}
In this final section we discuss a practical application of our transfer theorem to the mesh 
partitioning problem in scientific computing. Parallelizing finite elements computations requires to split the base
mesh in such a way that communication between different pieces is
minimized. This is naturally formalized as a (possibly multi-way)
sparsest cut problem, which we may want to solve using spectral
clustering. Guarantees for such methods in this setting were proved by
Miller-Teng-Thurston-Vavasis and Spielman-Teng~\cite{MTTV98,ST07}. 
More precisely, these papers show that spectral partitioning provides good cuts for 
meshes in $d$-dimensional Euclidean space provided that all $d$-simplices in the mesh are well-shaped,
 i.e. not too far from being equilateral.

 It is not hard to
design a $2$-fold cover of a general mesh such that our transfer result
provides guarantees for spectral clustering applied to anisotropic
meshes. Specifically, let $T$ be a triangulation of a domain $D\subset
\R^d$. Performing
a barycentric subdivision of all $d$-simplices gives a triangulation
$T'$. For a $d$-simplex $\sigma$ of $T$, let now $U_\sigma$ be the
interior of the
union of $\sigma$ with the $d+1$ $d$-simplices of $T'$ that share a
facet with $\sigma$. The collection of $U_\sigma$ forms a $2$-fold cover $\U$ of the domain,
and the corresponding Laplacian $\mathcal L_\U$ is defined using weights
$w_{\sigma_1,\sigma_2}$ that are proportional to the sum of the
volumes of $\sigma_1$ and $\sigma_2$. Hence, assuming that neighboring
$d$-simplices in $T$ have volumes within a ratio of $\kappa>1$, we see that the
eigenvalues of $\mathcal L_\U$ and those of the normalized Laplacian of the dual
graph of $T$ are also within a ratio of $\kappa$. 

\begin{prop}\label{fiedlermesh}
The Neumann value of $U_{\sigma}$ is at least $C^{-1}\kappa^{-1}\epsilon^{-2}$
for some universal constant $C>0$, where
$\epsilon$ is the maximum diameter of simplices in $T$.
\end{prop}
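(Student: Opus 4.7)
The plan is to apply Theorem~\ref{thm:2-fold} to $U_\sigma$ with a carefully chosen $2$-fold cover by pieces of diameter $O(\epsilon)$, in the same iterative spirit as the proof of Theorem~\ref{thm:main} (where the transfer principle is invoked recursively to bound the Neumann value of open stars). Let $b$ denote the barycenter of $\sigma$ and, for each facet $F_i$ of $\sigma$, set $\sigma_i := \mathrm{conv}(F_i \cup \{b\})$, so that $\sigma = \bigcup_{i=1}^{d+1} \sigma_i$; by the construction of $T'$, the simplex $W_i$ of $T'$ adjacent to $\sigma$ across $F_i$ equals $\mathrm{conv}(F_i \cup \{b_i'\})$, where $b_i'$ is the barycenter of the neighboring $d$-simplex $\sigma_i'$. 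I consider the cover
\[
\mathcal V := \{Y_0\}\cup\{Y_i\}_{i=1}^{d+1}\cup\{Z_i\}_{i=1}^{d+1}
\]
with $Y_0 := \mathrm{int}(\sigma)$, $Y_i := \mathrm{int}(\sigma_i\cup W_i)$, and $Z_i := \mathrm{int}(W_i)$. A point of $\sigma_i$ lies in exactly $Y_0$ and $Y_i$, and a point of $W_i$ in exactly $Y_i$ and $Z_i$, so $\mathcal V$ is indeed a $2$-fold cover of $U_\sigma$.

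Each $Y_0$ and each $Z_i$ is a convex $d$-simplex of diameter at most $\epsilon$, hence has Neumann value at least $\pi^2/\epsilon^2$ by the Payne-Weinberger inequality. The bipyramidal piece $Y_i = \sigma_i\cup W_i$ need not be convex when $d\geq 3$, but its Neumann value can still be bounded from below by $\Omega(\epsilon^{-2})$: Payne-Weinberger applied separately on the two convex halves $\sigma_i, W_i$ controls the $L^2$-variance of $f$ on each half, while a trace-Poincar\'e inequality across the common facet $F_i$ (using that $\sigma_i$ and $W_i$ meet along a full $(d-1)$-simplex of positive Hausdorff measure) controls the difference $\bar f_{\sigma_i}-\bar f_{W_i}$ of the two averages. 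Combining these yields $\int_{Y_i}(f-\bar f_{F_i})^2 \leq C_d\,\epsilon^2 \int_{Y_i}|\nabla f|^2$, and hence $\lambda(Y_i)\geq C_d^{-1}\epsilon^{-2}$ since the variance is minimised at $\bar f_{Y_i}$.

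For the discrete step, the weighted graph underlying $\mathcal L_{\mathcal V}$ is a spider of depth two: $Y_0$ is joined to each $Y_i$ by an edge of weight $\vol(\sigma_i)=\vol(\sigma)/(d+1)$, and each $Y_i$ is joined to $Z_i$ by an edge of weight $\vol(W_i)=\vol(\sigma_i')/(d+1)$. Under the $\kappa$-regularity assumption $\vol(\sigma_i')/\vol(\sigma)\in[\kappa^{-1},\kappa]$, all edge weights are comparable to $\vol(\sigma)$ up to a factor depending only on $\kappa$ (and $d$), and a direct spectral calculation on this weighted tree gives $\lambda_1(\mathcal L_{\mathcal V})\geq c/\kappa$ for some universal $c>0$ (the depth-two spider with unit weights already has Fiedler value $1-1/\sqrt 2$ by a short symmetric/antisymmetric calculation). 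Plugging $\eta=\Omega(\epsilon^{-2})$ and $\lambda_1(\mathcal L_{\mathcal V})=\Omega(\kappa^{-1})$ into Theorem~\ref{thm:2-fold}, rearranged as $\lambda(U_\sigma)\geq \eta\,\lambda_1(\mathcal L_{\mathcal V})/2$, yields the claimed bound $\lambda(U_\sigma)\geq C^{-1}\kappa^{-1}\epsilon^{-2}$.

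The main obstacle is the trace-Poincar\'e step of the second paragraph: one needs the trace constant on a $d$-simplex to depend only on the dimension and the diameter, and not on the aspect ratio, which requires some care because the simplices in an anisotropic mesh may be highly degenerate. The discrete eigenvalue computation and the volume bookkeeping for the $\kappa$-dependence are then routine.
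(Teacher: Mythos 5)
Your proof follows the paper's argument almost verbatim: same $2$-fold cover of $U_\sigma$ by $\mathrm{int}(\sigma)$, the $d+1$ exterior pieces $\tau_i$ of $T'$, and the $d+1$ ``kites'' $\sigma_i\cup\tau_i$; same observation that the resulting intersection graph is a once-subdivided star with Fiedler value bounded below by an absolute constant; same estimate that the weight ratio contributes a factor $\kappa^{-1}$; and the same final appeal to Theorem~\ref{thm:2-fold}. So in terms of structure this is exactly the paper's proof, and to that extent it is correct.

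The one place you diverge is precisely the one place where you should be congratulated: the paper asserts flatly that ``each element in the cover is a convex set with diameter at most $2\epsilon$'' and invokes Payne--Weinberger, whereas you correctly observe that the bipyramid $Y_i=\sigma_i\cup W_i$ need \emph{not} be convex. In fact the paper's claim is false in the very regime the proposition is meant to address. Take $d=2$, $F=[(0,0),(1,0)]$, $\sigma=\mathrm{conv}\{(0,0),(1,0),(10,1)\}$ and $\sigma'=\mathrm{conv}\{(0,0),(1,0),(0,-1)\}$: the two barycenters are $b=(11/3,1/3)$ and $b'=(1/3,-1/3)$, the segment $[b,b']$ crosses the line of $F$ at $(2,0)\notin F$, so $\sigma_1\cup\tau_1$ is not convex. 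Thus the published argument has a gap at this step, and you were right to flag it.

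That said, your proposed repair is not yet a proof and you acknowledge as much. The trace--Poincar\'e route as sketched is problematic because, as you suspect, scale-invariant trace constants on a simplex \emph{do} degenerate with the aspect ratio; you cannot simply cite a dimension-and-diameter-only trace constant for a general thin simplex. What saves the day here is special structure that your sketch does not exploit: each half $\sigma_i=\mathrm{conv}(F_i\cup\{b\})$ is a cone over the shared facet $F_i$, so $|F_i|/|\sigma_i|=d/h_i$ with $h_i\le\epsilon$, i.e.\ the common face is always ``large'' relative to the volume it bounds, which rules out a dumbbell and lets one prove a Poincar\'e inequality $\int_{\sigma_i}(g-\overline g_{F_i})^2\le C_d\,\epsilon^2\int_{\sigma_i}|\nabla g|^2$ by radial integration from the apex, with $C_d$ depending only on $d$. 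Summing this over $\sigma_i$ and $\tau_i$ and using that the mean over $Y_i$ minimizes the variance gives $\lambda(Y_i)\ge C_d^{-1}\epsilon^{-2}$ without any convexity of the union. You should either carry out such an estimate, or note that it is needed; as written, the second paragraph of your proposal replaces one unjustified assertion (convexity) with another (a uniform trace constant), and the proposition is not yet proved.
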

\begin{proof}
Let $\tau_i$, $i=1\dots d+1$, be the $d$-simplices in $T'$ that share
a facet with $\sigma$, and $\sigma_i$ be the
$d$-simplex in $T'$ that is included in $\sigma$ and shares a facet
with $\tau_i$. The interiors of $\sigma$, $\tau_i$, and of $\tau_i
\cup \sigma_i$ form a 2-fold cover of $U_{\sigma}$. The entries of the
corresponding Laplacian are within a factor $\kappa$ of the those of
the normalized Laplacian of the intersection graph of the elements of
the cover, which is a once subdivided star graph. Such a star graph
has Fiedler value lower bounded by a constant. Now each element in the
cover is a convex set with diameter at most $2\epsilon$, so by
\cite{PW60} their Neumann value is lower bounded by a constant times
$\epsilon^{-2}$. The claim then follows from theorem \ref{thm:2-fold}.
\end{proof}

Therefore, Theorem~\ref{thm:2-fold} applied to the cover $\U$ yields that the Fiedler value of the dual graph of $T$ is
at most $2C\kappa^2 \lambda_1(D) \epsilon^2$. By Cheeger's inequality,
a suitable spectral partitioning algorithm gives a balanced cut of size at most
$\kappa C'\sqrt{\lambda_1(D)}/\epsilon$, for some constant $C'$. We note that if $d$-simplices
in $T$ are nearly equilateral, then $\epsilon \simeq
(\vol(D)/n)^{1/d}$, where $n$ is the number of simplices in $T$. Hence
in this case 
we recover the $n^{1/d}$ behaviour proved in \cite{MTTV98,ST07} for the size
of the cut, since the assumption that simplices are well-shaped
implies an upper bound on $\kappa$. However, the methods used in those works do not seem to
apply to the case of general anisotropic meshes.

\medskip

\thanks{{\it Acknowledgments}: This work was started during a stay of the first named author 
at Laboratoire J. A.
Dieudonn\'e at Universit\'e de Nice Sophia-Antipolis, and pursued during another 
visit at INRIA Sophia-Antipolis. He thanks both these institutions, specially Philippe Maisonobe and Jean-Daniel Boissonnat, for their support. 

This research has
been partially supported by the European Research Council under
Advanced Grant 339025 GUDHI (Geometry Understanding in High Dimensions).
}

\end{document}